\theoremstyle{plain}
\newtheorem{thm}{\protect\theoremname}[section]
 \newcommand\thmsname{\protect\theoremname}
 \newcommand\nm@thmtype{theorem}
 \theoremstyle{plain}
  \theoremstyle{remark}
  \newtheorem{rem}[thm]{\protect\remarkname}
  \theoremstyle{definition}
  \newtheorem*{example*}{\protect\examplename}
  \theoremstyle{definition}
  \newtheorem{example}[thm]{\protect\examplename}
  \theoremstyle{plain}
  \newtheorem{lem}[thm]{\protect\lemmaname}
  \theoremstyle{plain}
  \newtheorem{prop}[thm]{\protect\propositionname}
  \theoremstyle{plain}
  \newtheorem{cor}[thm]{\protect\corollaryname}
 \newtheorem*{question}{Question}
 \newtheorem*{openquestion}{Open Question}
  \theoremstyle{definition}
  \newtheorem{my@rem}[thm]{Remark}
  \renewenvironment{rem}{\begin{my@rem}}{\end{my@rem}}
  \providecommand{\examplename}{Example}
  \providecommand{\lemmaname}{Lemma}
  \providecommand{\propositionname}{Proposition}
  \providecommand{\remarkname}{Remark}
  \providecommand{\theoremname}{Theorem}
\providecommand{\theoremname}{Theorem}
 \providecommand{\corollaryname}{Corollary}
\newcommand{\C}{\mathbb{C}}
\newcommand{\R}{\mathbb{R}}
\newcommand{\Q}{\mathbb{Q}}
\newcommand{\Z}{\mathbb{Z}}
\newcommand{\N}{\mathbb{N}}
\newcommand{\A}{\mathbb{A}}
\renewcommand{\P}{\mathbb{P}}
\def\O{{\mathcal O}}
\def\d{{\rm d}}
\def\CVD{{\hfill\hfil{\lower 2pt\hbox{\vrule\vbox to 7pt
{\hrule width  4pt\varphifill\hrule}\varphirule}}}\par}
\newcommand{\abs}[1]{\left| #1 \right|}
\newcommand{\floor}[1]{\left\lfloor #1 \right\rfloor}
\newcommand{\ceiling}[1]{\left\lceil #1 \right\rceil}
\title{On the disk of convergence of algebraic power series}
\author{Francesco Veneziano}
\address{Department of mathematics, University of Genova, Via Dodecaneso~35, 16146 Ge\-no\-va, Italy}
\email{francesco.veneziano@unige.it}
\author{Umberto Zannier}
\address{Scuola Normale Superiore, Piazza dei Cavalieri 7, 56126 Pisa, Italy}
\email{umberto.zannier@sns.it}
\subjclass[2010]{12J25, 30G06, 11S80, 14H05}
\keywords{p-adic analysis, algebraic functions, power series, disk of convergence}
\begin{document}

\begin{abstract}
This paper is mainly concerned with the  disk of convergence of a power series $s(x)$ representing an algebraic function of $x$ and specifically with the relation between this disk and the branch points of the function. We shall focus especially on the $p$-adic case,  answering some questions of basic nature, seemingly absent from the existing literature. Our methods are simple and essentially self-contained.

To illustrate the issues, recall that in the complex case it follows from standard arguments that the open disk of convergence cannot contain all the branch points of $x$ unless the series represents a rational function.
In the $p$-adic case, we show that the analogous assertion is not true in complete generality; but we also confirm it in a number of cases, for instance under the assumption that $p$ is not smaller than the degree of $s(x)$ over the field of rational functions of $x$.  In particular this gives an upper bound for the radius of convergence which has intrinsic nature.

We shall also touch several related questions.
\end{abstract}

\maketitle

\section{Introduction}
The present paper is motivated by a natural question about {\it algebraic} power series, which is simply stated and might maybe look trivial at first sight, but does not seem to be treated explicitly in the existing literature, at any rate to our knowledge.

The  issue concerns the {\it disk  of convergence} of a power series $s(x)=\sum_{m=0}^\infty a_mx^m\in K[[x]]$ representing an `algebraic function' of $x$, i.e.,   such that $f(x,s(x))=0$ for some nonzero polynomial $f\in K[X,Y]$. Here  $K$ is a   field   of characteristic $0$  equipped with a (nontrivial) absolute value $| \cdot |$.   It is well known (and not difficult to prove) that  $s(x)$ has a nonzero radius of convergence with respect to this absolute value.\footnote{We refer specifically to the paper \cite{DR}, which will be more relevant later, for proofs valid also in the ultrametric case.}

\medskip

Such radius of convergence has been studied for a long time, both in the classical case over $\C$ and for fields equipped with an ultrametric absolute value, which is the main setting for this paper. In this last case the radius of convergence has also an arithmetical significance, for example in connection with the famous theorem of Eisenstein on the coefficients of an algebraic series defined over a number field, stating that  for   suitable integers $b, b'>0$ all  the numbers $b'b^{n}a_n$ are algebraic integers.  This qualitative result has been made explicit and refined by subtle $p$-adic analysis in the papers \cite{DR} by Dwork and Robba and \cite{DvdP} by Dwork and van der Poorten; the issue was further studied e.g. by  Bilu and Borichev in  \cite{BB} and by Mechik  in \cite{M1}, \cite{M2}.  More recent investigations  study the radius from different viewpoints; for instance the paper \cite{B} by Baldassarri  proves  the continuity of the radius (depending on the center of the expansion),  in fact for a rather more general class of functions satisfying linear differential equations.

\medskip

The papers mentioned above give `explicit' estimates for the radius of convergence, leading to bounds for the so-called {\it Eisenstein constant} (related to the  integer $b$ above).

Here instead we shall  focus on the {\it relation between the radius and the branch points}  of the coordinate $x$ on the curve $f=0$.\footnote{We recall below  in \S~\ref{S.notation} some relevant definitions.}

Together with the poles,  the branch points make up {\it all} potential singularities for $s(x)$ (which is a well-defined analytic function of $x$ in any open disk of convergence). When $s(x)$ represents a rational function, there are no branch points and the only possible singularities are poles.
Then,  in the converse direction the following  question arises:

\medskip

\begin{question}
 Assuming that $s(x)$  does not represent a rational function, is it possible that it converges in an open disk  
containing all the (finite) branch points of $x$?
\end{question}

\medskip

Let us remark at once that, naively,  we would expect a \emph{negative} answer: as mentioned,  any possible singularity preventing the extension of $s(x)$ has to be sought among {\it poles} and {\it branch points}. Here the second type is excluded by assumption (for {\it finite} branch points), whereas the polar singularities may be eliminated by the simple device of multiplying $s(x)$ by a suitable nonzero polynomial.
Thus it might seem  at first sight that we may eliminate all (finite) singularities, which would imply that $s(x)$ is a polynomial.

\begin{wrapfigure}{r}{0.3\textwidth}
\includegraphics[width=0.28\textwidth]{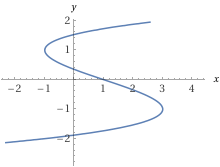}
\centering
\caption{\tiny Plot of the curve $x = y^3 - 3 y + 1$. Over $\C$ the disk of convergence of the series $y(x)$ with negative value at 0 has a radius of 3 and contains the branch point at $x=-1$.}\label{fig:1}
\end{wrapfigure}

However, this sketch of approach needs details and further consideration to be be carried out; for instance  $s(x)$ cannot generally be extended at once  to a global function with the sought  properties on the whole of $K$ even after eliminating poles.

This kind of analysis is especially delicate  for ultrametric absolute values.
We also note that, even over $\C$, it may well happen that the disk of convergence contains {\it some} branch point, corresponding to {\it branches} of the algebraic function different from the one represented by the series $s(x)$ in the disk; a basic example of the situation is depicted in Figure~\ref{fig:1}. Our concern is with disks containing {\it all} branch points.

\medskip

The present question  looks (and partly \textit{is}) elementary, so we hope it will not be entirely  free of interest  even  for possible readers
not fully familiar  with the context. Especially thinking of  them, we found  it worthwhile to recall in \S~\ref{S.notation} some facts from the basic theory of algebraic functions. We stress that our methods are essentially elementary and almost self-contained.

In the same section we will also fix some notation, terminology, and normalizations. But, before that, let us state our main results.

\subsection{Setting and main statements}
Field extensions will not affect our question, so we shall suppose that $K$ is algebraically closed of characteristic $0$ and complete with respect to a nontrivial absolute value $|\cdot |$.
We denote by $d>0$ the degree of $s(x)$ over $K(x)$, so $f(x,s(x))=0$ for some nonzero irreducible polynomial $f\in K[X,Y]$, of degree $d$ in $Y$.

\medskip

{\bf The complex case}.  The complex case (i.e.,  $K=\C$)  is  easy to study, and a {\it negative}  answer to the  Question has been somewhat well-known for centuries.  It may be proved essentially by analytic continuation and, in fact, it is to some extent   implicit in most basic  treatments of the theory of compact Riemann Surfaces. However, for the sake of completeness  we shall recall a couple of arguments in detail, since we did not find it easy to locate an explicit statement in the literature.

 \medskip

 {\bf The ultrametric case}. On the other hand, if the absolute value of $K$ is ultrametric  (for instance if $K$ is the completion $\C_p$ of an algebraic closure of $\Q_p$) analytic continuation is not possible and the arguments mentioned above for $\C$ fail in a crucial way.

 In the present paper we shall see by simple examples  that, indeed, given any prime number $p$,

\smallskip
 \noindent{\it the expected (negative) answer to the Question  does  {\bf not}  generally hold  over $K=\C_p$.}
\smallskip

 This may be restated as the assertion that  {\it there exist  non-rational algebraic functions represented by a power series which converges in a disk containing all the (finite) branch points}.

However,  by essentially elementary means we shall also prove, under certain additional hypotheses, a {\it negative} answer to the Question in the ultrametric case. 
 For instance, relying on a certain  theorem of Dwork and Robba \cite{DR}, we shall  observe that  
 {\it  the Question has a negative  answer if the residue field has characteristic $p\ge d$} (the case $p=d$ is slightly more delicate and requires to enter into the proofs of \cite{DR}).  We  shall also notice that the same conclusion holds under simple additional assumptions on the ramification.

 \medskip

 More precisely, with the notation as above, we have the following results.

  \begin{thm} \label{T.Y} Let $d:=[K(x,s(x)):K(x)]$ and suppose that one at least of the following conditions is satisfied:
\begin{enumerate}[label = (\roman*)]
 \item\label{T.Y:i} $K=\C$.\footnote{We could merely suppose that $K$ contains $\C$ inducing on it the usual absolute value. But it is known that then $K=\C$, see for instance Corollary~2.4, \S~2, Ch.~XII in Lang's Algebra book \cite{LangAlgebra}.}

 \item\label{T.Y:ii} The absolute value is ultrametric, with residue field of characteristic $0$ or $p\ge d$.
 \item\label{T.Y:iii}  There exists $x_0\in K$ such that  all the ramification indices above $x_0$ are greater than  $1$.
\end{enumerate}
    
Then the Question has a negative answer. Namely, if the series $s(x)\in K[[x]]$   is convergent in a  disk $\{z\in K: |z|< R\}$ containing every finite branch point of $x$, then  $s(x)$ is a rational function (i.e. $d=1$).
 \end{thm}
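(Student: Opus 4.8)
My plan is to reduce the three cases to a common strategy: assume $s(x)$ converges in the disk $D = \{|z| < R\}$ containing all finite branch points, extend it past the polar singularities, and derive that the resulting function is too well-behaved to be transcendental over $K(x)$. Concretely, let $g(x) \in K[x]$ be a nonzero polynomial clearing the denominators so that $h(x) := g(x) s(x)$ is, on the curve $\mathcal{X} : f = 0$, regular at every point lying above a point of $D$; this is possible because the only obstructions to regularity of $s$ are finitely many poles, each killed by a suitable factor of $g$. The key point to establish is that $h$, a priori only an analytic function on $D$, is in fact a \emph{polynomial} (equivalently, that $s(x)$ is rational with $d = 1$, since a polynomial multiple of $s$ being a polynomial forces $s \in K(x)$, and an algebraic power series in $K(x)$ that is entire-like on $D$ after clearing poles must have $d=1$). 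So the heart of the matter is: an algebraic function that is analytic on a disk containing all its finite branch points, with poles removed, has no finite singularities at all, hence is a polynomial --- and the obstacle is that over an ultrametric field this does \emph{not} follow from naive analytic continuation.

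For case \ref{T.Y:i}, $K = \C$, I would argue by classical analytic continuation along paths in $D$ (which is simply connected): since $D$ contains all finite branch points, the multivalued algebraic function $s$ has, after multiplying by $g$, a single-valued analytic continuation to all of $\C$ except possibly at $\infty$; being algebraic it has at worst polynomial growth at $\infty$, so it is a polynomial, and then $s$ is rational. For case \ref{T.Y:iii}, the hypothesis that every ramification index above some $x_0$ exceeds $1$ is incompatible with $s(x) \in K[[x]]$ having a convergent expansion \emph{at $0$} that extends (after clearing poles) past $x_0$: I expect the argument to be that the single-valuedness of the branch of $s$ represented by the power series, transported to a neighbourhood of $x_0$ inside $D$, forces a point above $x_0$ of ramification index $1$ (a branch on which $x - x_0$ is a local uniformizer), contradicting the assumption --- unless $d = 1$, in which case there is nothing to prove. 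The cleanest phrasing is probably via the valuation/Newton-polygon behaviour of $s(x - x_0)$ or via counting branches.

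The genuinely $p$-adic case is \ref{T.Y:ii}, and this is where I expect the main obstacle to lie. Here I would invoke the theorem of Dwork and Robba \cite{DR} on the radius of convergence of a solution of the minimal polynomial $f(x, Y) = 0$ near a branch point: their result controls the radius of the $p$-adic expansion around a ramification point of index $e$, and when $p \ge d$ (so in particular $p > e$ for every ramification index $e \le d$, with the borderline $p = d$ handled by going inside their proof), the expansion around a branch point $x_0 \in D$ in the uniformizer $(x - x_0)^{1/e}$ has radius of convergence bounded in a way that is \emph{incompatible} with $s$ being analytic (single-valued) throughout $D$ unless $e = 1$ there. Thus again every point above a point of $D$ is unramified, so $s$ (after clearing poles) is analytic and bounded by a polynomial on $D$; one then needs a rigid-analytic or Liouville-type step to conclude it is a polynomial. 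I anticipate the subtle part is precisely the borderline $p = d$ and making rigorous the passage from "single-valued analytic on $D$ with controlled growth, all branch points inside $D$" to "rational", which over $\C_p$ cannot use analytic continuation and must instead use the algebraicity of $s$ together with the Dwork--Robba estimate to rule out any finite branch point surviving --- effectively showing the curve $\mathcal{X}$ is rational (genus $0$ with $x$ of degree $1$ onto $\P^1$).
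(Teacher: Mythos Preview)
Your treatment of cases \ref{T.Y:i} and \ref{T.Y:iii} is essentially what the paper does: for $K=\C$ analytic continuation plus a Liouville argument, and for \ref{T.Y:iii} the observation that the Taylor expansion of $s$ around $x_0\in D$ furnishes a branch with ramification index $1$, contradicting the hypothesis.

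Your plan for \ref{T.Y:ii}, however, has a real gap, and it is precisely the step you flag as the ``Liouville-type'' one. Over an ultrametric field there is no statement of the form ``an analytic function on $D(0,R^-)$ which is bounded polynomially and has no branch points in $D$ must be a polynomial'': any power series converging on $D$ already satisfies those hypotheses. So even if you could show that every point of $\widetilde{\mathcal Z}$ above $D$ is unramified (and your sketch only shows this for the single branch carried by $s$, not for the other branches), you would still have no mechanism to pass from ``analytic on $D$'' to ``polynomial''. Your proposed use of Dwork--Robba at the interior branch points does not supply such a mechanism; at best it reproduces the argument of \ref{T.Y:iii} at each $x_0$, which is not enough.

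The paper's idea is to apply Dwork--Robba not at the branch points but at \emph{infinity}. After rescaling so that the branch points lie in $D(0,1^-)$ while $s$ converges on some $D(0,r^-)$ with $r>1$, and after replacing $x$ by a suitable power $x^l$ so that $\infty$ is unramified, one sets $u=1/x$ and applies the Dwork--Robba proposition to $Q(u,Z)=u^hf(1/u,Z)$ on $D_u(0,1^-)$: this disk contains no branch points of $u$, so the hypotheses are met, and one obtains $d$ Laurent-series solutions $\theta_i(x^{-1})$ converging for $|x|>1$. In the annulus $1<|x|<r$ both $s(x)$ and the $\theta_i$ are defined, and since $\prod_i\bigl(s(x)-\theta_i(x^{-1})\bigr)=f(x,s(x))=0$ in a domain of Laurent series, one factor vanishes identically; a power series in $x$ equal to a Laurent series in $1/x$ with only finitely many positive-exponent terms is a polynomial. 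This annulus-matching step is what replaces analytic continuation in the ultrametric setting, and it is the idea your proposal is missing. The borderline case $p=d$ is handled as you anticipate, by noting that $s(x)$ itself lies in $\widehat{K(u)}$ (since its coefficients tend to $0$), so $Q$ already splits off a linear factor there and the residual degree is $d-1<p$, making the tame-ramification hypothesis of Dwork--Robba automatic.
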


Note that, in particular, condition \ref{T.Y:iii} holds if   the extension $K(x,s(x))/K(x)$%
is  Galois   of degree $d>1$, since in such an extension  the ramification indices above a given branch points are all equal. Therefore a negative answer to the Question holds for $d\le 2$ without supplementary assumptions.
On the other hand we shall see that the inequality $d\leq p$ in (ii) cannot be generally improved.

 \medskip

{\bf Converse direction and an open question}. Suppose for simplicity that  the polynomial $f(X,Y)$ is monic in $Y$, which can be achieved by multiplying $s(x)$ by the leading coefficient of $f$; this has the effect of removing all polar singularities of $s(x)$. Then in the complex case, the argument by analytic continuation shows that the disk of convergence goes {\it at least} up to the {\it nonzero} branch point {\it nearest} to the origin; therefore the closure of the disk of convergence of $s(x)$ contains at least one branch point different from the origin. This conclusion goes somewhat in the converse direction of our results.

In the ultrametric case the analogous conclusion is not generally true. The issue has been studied in several works, also because of applications to explicit versions of  the already mentioned classical Eisenstein theorem on algebraic power series, in the shape  that only finitely many primes appear in the denominators of the coefficients of an algebraic power series defined over a number field.
 The paper of Dwork and Robba \cite{DR} yields  sufficient conditions  which are sometimes best-possible, and this is again taken into account with additional precision in \cite{DvdP}. The  paper \cite{DR} shall be  crucial also for  our proof of part \ref{T.Y:ii} of Theorem \ref{T.Y}, and we shall enter into those proofs for treating the `borderline' case $p=d$.
 Also, combining \cite{DR} with Theorem \ref{T.Y}\ref{T.Y:ii}, we shall immediately obtain the following further result (where we normalize  $f$ as above).

 \begin{cor} \label{C.Yii} Suppose that $f(X,Y)$ is monic in $Y$, of degree $d>1$, and that either $K=\C$ or that the absolute value is ultrametric, with residue field of characteristic $0$ or $p> d$.

 Then,  there exist finite  nonzero branch points $\rho_1,\rho_2\in K^*$ such that the radius of convergence $R$  satisfies 
 $|\rho_1|\le R\le|\rho_2|$.
 \end{cor}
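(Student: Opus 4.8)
The plan is to establish the two inequalities $|\rho_1|\le R$ and $R\le|\rho_2|$ separately, where $R$ denotes the radius of convergence of $s(x)$: the upper bound will be a direct appeal to Theorem~\ref{T.Y}, and the lower bound will come from analytic continuation when $K=\C$ and from the cited theorem of Dwork and Robba \cite{DR} in the ultrametric case. Throughout I write $\Sigma\subseteq K$ for the set of finite branch points of $x$ on $f=0$; since $f$ is monic in $Y$, $\Sigma$ is the finite zero locus of the discriminant $\disc_Y(f)\in K[x]$, and $s(x)$, being a root of a monic polynomial over $K[x]$, is integral over $K[x]$ and hence has no poles at finite points.

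For the upper bound I would note that $d>1$ forces $s(x)$ to be non-rational, while the hypotheses of the Corollary are precisely those under which Theorem~\ref{T.Y} applies (part~\ref{T.Y:i} when $K=\C$, part~\ref{T.Y:ii} otherwise, since $p>d$ implies $p\ge d$). Applying the contrapositive of that theorem to the open disk $\{z\in K:|z|<R\}$, on which $s$ converges, shows that this disk cannot contain all of $\Sigma$; in particular $R<\infty$ and there is $\rho_2\in\Sigma$ with $|\rho_2|\ge R$, and since $R>0$ necessarily $\rho_2\neq0$, so $\rho_2\in K^{*}$. This step also exhibits a nonzero point of $\Sigma$, so the quantity $r:=\min\{|\rho|:\rho\in\Sigma,\ \rho\neq0\}$ is well defined and, $\Sigma$ being finite, attained at some $\rho_1\in K^{*}$.

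For the lower bound it remains to see that $R\ge r=|\rho_1|$. If $K=\C$ this is the classical fact already recalled in the Introduction: $s$ continues holomorphically along every path in $\C\setminus\Sigma$, hence defines a single-valued holomorphic function on $\{|z|<r\}\setminus\{0\}$ which near $0$ coincides with the given convergent series; the singularity at $0$ being bounded, it is removable, so $s$ is holomorphic on $\{|z|<r\}$ and $R\ge r$. If the absolute value is ultrametric with residue characteristic $0$ or $p>d$, I would instead invoke \cite{DR}, which gives in this case that the radius of convergence at the origin of a power series integral over $K[x]$ is at least the distance from $0$ to the nearest nonzero branch point, i.e.\ $R\ge r$. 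Putting the two halves together yields $|\rho_1|\le R\le|\rho_2|$ with $\rho_1,\rho_2\in K^{*}$ finite branch points, which is the assertion.

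This argument is essentially bookkeeping once Theorem~\ref{T.Y} and \cite{DR} are granted, and I do not expect a genuine obstacle inside it; the substance, and the reason for the precise hypotheses, resides in those two inputs. It is worth noting that the strict inequality $p>d$ is imposed by the ultrametric lower bound drawn from \cite{DR} — the upper bound from Theorem~\ref{T.Y}\ref{T.Y:ii} already works for $p\ge d$ — because strict inequality is what keeps all ramification indices prime to $p$ and thereby forces the $p$-adic radius of convergence to behave as over $\C$, whereas for general $p$ this lower bound can fail. The one point needing a word of care within the proof is that $\Sigma\setminus\{0\}$ be nonempty so that $\rho_1$ exists, and this is exactly what the upper-bound step delivers.
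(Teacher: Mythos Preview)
Your proof is correct and follows essentially the same approach as the paper's: the upper bound comes from Theorem~\ref{T.Y}, and the lower bound from Dwork--Robba \cite{DR} (the paper phrases the latter as a contradiction argument via Proposition~\ref{P.DR} after an explicit rescaling, but the content is identical). One incidental inaccuracy worth fixing: $\Sigma$ is contained in, but not generally equal to, the zero locus of $\disc_Y(f)$ even when $f$ is monic (cf.\ the paper's footnote with the example $Y^2-X(X-x_0)^2$); this does not affect your argument, since you only use that $\Sigma$ is finite.
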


Observe that for this Corollary, in the ultrametric case  we need the stronger assumption that $p>d$. In Theorem  \ref{T.N}\ref{T.N:ii}  below we point out very simply that this cannot be relaxed to $p\ge d$ (contrary to Theorem \ref{T.Y}).
 
\medskip

These results give information on the radius of convergence of different flavour compared to past results mentioned above.

In the case $K=\C$ the arguments will show that actually the radius $R$ {\it equals}  the absolute value of some branch point. For ultrametric valuations, in the most general case, we do not know whether this is true.
Therefore we may ask whether a sharpening of the Corollary holds, and formulate this as follows:

\medskip

\begin{openquestion}In the case of an ultrametric valuation, under the assumptions of Corollary~\ref{C.Yii} (in particular, $p>d$), is it true that the radius $R$ {\it equals}  the absolute value of some branch point?
\end{openquestion}

 At first sight this looks as an attractive question to us, as it would give a meaningful geometric interpretation of the radius of convergence.\footnote{It may be that this question admits an easy anwer. We wonder whether it might be approached via some extension to annuli, in place of disks, of Theorem~2.1 of \cite{DR}, also used below as Proposition~\ref{P.DR}. However we do not know whether such an analogue is sensible, not to mention true.}

 \medskip

If the radius of convergence is equal to the distance to some branch point, it follows that it is of the form $p^\beta$ for some \emph{rational} value of $\beta$. We plan to come back to this topic in a future note, establishing that this conclusion is anyways true. Compare this behaviour with what happens in the more general case of series satisfying differential equations, where some statement in this direction can be obtained, but not in full strength (see \cite{Kedlaya_2010}*{Corollary 11.8.2} and our Remark~\ref{rem:ex_ogni_raggio} later).

 \bigskip

{\bf About counterexamples}.  As said, the condition $p\ge d$ in Theorem \ref{T.Y}\ref{T.Y:ii}, i.e.  in the cases when the absolute value is ultrametric with residue field of characteristic $p>0$, cannot generally be removed and  plays a crucial role in the theorem of Dwork and Robba \cite{DR} that we shall use. However {\it a priori} it does not seem obvious that it cannot be removed in our application as well.\footnote{Indeed, the (counter)examples given  in \cite{DR} do not apply to our situation.}
 In fact, this  turns out to be  the case.

 Also, the condition $p>d$ cannot be relaxed to $p\ge d$ in  Corollary \ref{C.Yii}, this time with more evident examples (appearing already in \cite{DR}). We state  this as the following:

 \begin{thm}[Counterexamples] \label{T.N}   \leavevmode
 \begin{enumerate}[label = (\roman*)]
  \item\label{T.N:i} For each prime $p$ there exists a non-rational algebraic  power series as above, with $K=\C_p$, $d=p+1$, converging in a disk $\{z\in K: |z|< R\}$
 which contains all finite branch points.
  \item\label{T.N:ii} For each prime $p$, there exists an algebraic series as above of degree $p$ over $\C_p(x)$ whose radius of convergence is strictly smaller than the minimal absolute value of a nonzero finite branch point.
 \end{enumerate}
 \end{thm}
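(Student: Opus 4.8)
The plan is to produce two completely explicit curves over $\C_p$; in each case everything reduces to computing a few critical values together with one standard $p$-adic estimate, so I do not expect a genuine obstacle, only routine bookkeeping.

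For part \ref{T.N:ii} I would take $f(X,Y)=Y^{p}-(1-X)$ and let $s(x)=(1-x)^{1/p}=\sum_{k\ge 0}\binom{1/p}{k}(-x)^{k}$ be the branch with $s(0)=1$. First one checks that $T^{p}-(1-X)$ is irreducible over $\C_p(X)$ (it is Eisenstein at the place $X=1$, where $1-X$ vanishes to order $1$), so $d=p$; that $x=0$ is unramified, so $s$ is a genuine power series; and that, since $\tfrac{dX}{dY}=-pY^{p-1}$ vanishes only at $Y=0$, the unique finite branch point is $X=1$, of absolute value $1$. For the radius I would write $\binom{1/p}{k}=\tfrac{1}{p^{k}k!}\prod_{j=0}^{k-1}(1-jp)$ and note that each factor $1-jp$ is a $p$-adic unit, so $|\binom{1/p}{k}|_{p}=p^{k}\,p^{v_{p}(k!)}$; using Legendre's formula $v_{p}(k!)=\tfrac{k-\sigma_{p}(k)}{p-1}\le\tfrac{k-1}{p-1}$, with the bound approached along $k=p^{m}$, one gets $\limsup_{k}|\binom{1/p}{k}|_{p}^{1/k}=p^{p/(p-1)}$, hence radius of convergence $=|p|_{p}^{\,p/(p-1)}<1$, strictly below the distance $1$ to the nearest (indeed only) nonzero finite branch point. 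This is essentially the example already in \cite{DR}.

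For part \ref{T.N:i} I would take $f(X,Y)=Y^{p+1}-Y-X$ and let $s(x)\in\C_p[[x]]$ be the power series root with $s(0)=0$. Since $X=Y^{p+1}-Y$ expresses $\C_p(Y)$ as an extension of $\C_p(X)$ of degree $p+1$ (the degree of the polynomial $Y^{p+1}-Y$), the polynomial $f$ is irreducible and $d=p+1>1$, so $s$ is non-rational; and $x=0$ is unramified because $\tfrac{\partial f}{\partial Y}=(p+1)Y^{p}-1$ is nonzero at $Y=0$ and at each $p$-th root of unity, these being the $p+1$ points over $X=0$. The crucial point is that $s$ has $p$-adically integral coefficients: the iteration $s^{(0)}=0$, $s^{(n+1)}=(s^{(n)})^{p+1}-x$ stays in $\Z_p[x]$ and converges $x$-adically to $s$ (the $x$-order of $s^{(n)}-s$ grows by at least $p$ at each step), whence $s\in\Z_p[[x]]$; equivalently, by Lagrange inversion the nonzero Taylor coefficients of $s$ are $\pm\tfrac{1}{pm+1}\binom{(p+1)m}{m}$, which are Fuss--Catalan integers. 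Hence $|a_m|_p\le 1$ for all $m$, so $s$ converges on the open disk $\{|z|<1\}$. Finally I would check that every finite branch point has absolute value $|p|_p$: a critical point satisfies $(p+1)Y^{p}=1$, forcing $|Y|_p=1$ since $p+1$ is a unit, and the corresponding critical value is $X=Y^{p+1}-Y=-\tfrac{p}{p+1}Y$, of absolute value $|p|_p=\tfrac1p$; over $X=\infty$ the map is totally ramified. As $\tfrac1p<1$, all finite branch points lie in $\{|z|<1\}$, and one may take any $R\in(\tfrac1p,1]$.

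The only steps needing the slightest care are the $v_{p}(k!)$ estimate in \ref{T.N:ii} and checking in both parts that no finite branch point has been overlooked; but each map $\P^{1}\to\P^{1}$ here is given by an explicit polynomial in $Y$, so its ramification over finite points is exactly the zero set of an explicit derivative, and its behaviour over $X=\infty$ is read off from the leading term, which makes this entirely routine. The real content is the choice of curves: $Y^{p+1}-Y-X$ is of Artin--Schreier type, which simultaneously forces bounded coefficients (radius $\ge 1$) and squeezes all of its finitely many finite branch points into $|x|_p=|p|_p<1$, while the pure $p$-th root $Y^{p}=1-X$ pushes the radius below $1$ through the familiar blow-up of $v_{p}(k!)$, this being exactly the borderline $d=p$ situation excluded from the hypothesis $p>d$ of Corollary~\ref{C.Yii}.
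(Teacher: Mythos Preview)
Your proposal is correct; both parts go through with only a cosmetic slip: in part~\ref{T.N:i} the parenthetical ``the $x$-order of $s^{(n)}-s$ grows by at least $p$ at each step'' is not literally true for the first few iterates (from order $k$ the fixed-point iteration reaches order $\min(k+p,2k)$, which is $2$ when $k=1$), but the sequence still converges $x$-adically, and in any case your alternative Lagrange-inversion argument via the Fuss--Catalan integers $\frac{1}{pm+1}\binom{(p+1)m}{m}$ is independent and clean.

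For part~\ref{T.N:ii} you have chosen essentially the paper's first example, $Y^{p}=1\pm X$, with the same radius computation $p^{-p/(p-1)}$; the paper additionally offers a second, less obvious, example $Y^{p-1}(Y-1)=X$ with ramification index~$2$ at the relevant branch point, to show the phenomenon is not tied to wild ramification on the curve.

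For part~\ref{T.N:i} your curve $Y^{p+1}-Y=X$ is different from the paper's choice $Y^{p}(Y+1)=X$ (branch $y(0)=-1$), but the two are close cousins and the proof mechanism is identical: a degree-$(p+1)$ polynomial map $\P^{1}\to\P^{1}$ whose finite critical values all land in $|x|_p<1$, together with a Hensel/Newton lift (or an explicit Fuss--Catalan formula) showing the chosen branch lies in $\Z_p[[x]]$. One genuine difference is that in the paper's example the origin $x=0$ is itself a branch point (the ramification sitting on the \emph{other} sheet $y=0$), a feature the authors note was convenient for discovering the example; your curve avoids this, since $x=0$ is unramified, which makes the verification marginally tidier. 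Conversely, the paper's $p=2$ warm-up $y^{3}-3y=2-x$ admits an amusing closed form via Chebyshev polynomials that your family does not obviously share.
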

With hindsight these counterexamples are certainly neither deep nor unexpected, but we could not find them in the literature and in fact they are apparently not widely known.

 \begin{rem}\label{R.diff}  {\bf Linear differential equations}. We remark that algebraic functions in $K[[x]]$ satisfy linear differential equations over $K(x)$ of the so-called {\it Fuchsian type}, namely having only regular singularities; see \cite{DGS} for the basic theory, also over $p$-adic fields. In the case of algebraic functions, the branch points appear among the singularities of such differential equations. One could then formulate a similar {\it  Question} for solutions of differential equations of Fuchsian type.  We do not have any result in  such  direction. We note that the mentioned paper \cite{DR}, although using linear differential equations for algebraic functions, does not seem to contain analogous results of similar strength, nor remarks in the same direction, for solutions more general than the algebraic ones (but see \cite{DGS}). (We  further observe that the condition that the  singularities are {\it regular}, and  also that the {\it  exponents} are rational, are highly relevant here; indeed,  see the paper \cite{BS} of  Bombieri-Sperber for some upper bounds on the radii of convergence when these conditions are not fulfilled.)
 \end{rem}

\subsection{Further related issues}  Note that a negative answer to the Question for a certain $K,s(x)$  may be restated (in the contrapositive sense) as follows:

{\it If the series $s(x)$ converges in a disk $\{z\in K:|z|<R\}$, then there exists a branch point $\rho\in K$ such that $|\rho|\ge R$}.

Here a further question arises about the behaviour of the series on the boundary of the disk of convergence. Suppose that the series $s(x)$ converges in the `closed' disk\footnote{We use the quotation marks because in the ultrametric case both kinds of disks are  open and closed in the topology, so that `closed' refers just to the shape of the disk.}   $\{z\in K:|z|\le R\}$; 
then it seems reasonable to ask if this implies the existence of a branch point $\rho\in K$ satisfying the strict inequality $|\rho|>R$.

Note that, leaving aside the easy case of rational functions,
such (absolute) convergence on the boundary might well happen in the complex case with plenty of simple examples: 
consider for instance the expansion  $$(1+x)^\frac{1}{2}=\sum_{n=0}^\infty \binom{1/2}{n}x^n=\sum_{n=0}^\infty (-1)^{n-1} \binom{2n}{n} \frac{x^n}{2^{2n}(2n-1)};$$ here the $n$-th coefficient is in absolute value $\asymp n^{-3/2}$, so there is absolute convergence for $|x|\le 1$, though the radius of convergence is precisely $1$.

 This example also shows that for $K=\C$ (and $d\ge 2$)  it can happen that {\it all}  finite branch points lie in the closed disk of convergence (in the example, $-1$  is the only finite branch point).

On the contrary, it is a (special case of a) theorem due to Bosch-Dwork-Robba \cite{BDR} that, for ultrametric valuations, if an algebraic series $s(x)$ converges in a disk $\{z\in K:|z|\le R\}$, where $R$ is in the value group $|K^*|=\{|x|:x\in K^*\}\subset \R_{> 0}$,  then in fact its radius of convergence must be strictly larger than $R$.

This implies the following refinement of part \ref{T.Y:ii} of Theorem \ref{T.Y}.
\medskip

\noindent{\bf Refinement of Theorem~\ref{T.Y}\ref{T.Y:ii}}: {\it
 In the above assumptions (ii),  if the series $s(x)$ is not rational and converges in a closed disk $\{z\in K:|z|\le R\}$ then there exists a branch point $\rho \in K^*$ such that $|\rho|> R$}.

\medskip
Since this issue appears naturally in the present context, we state explicitly the relevant  result of the paper \cite{BDR}, giving  for it a self-contained independent proof; namely, we shall prove the following statement

\begin{thm}\label{T.BDR} \cite{BDR}  Notation being as above, assume that $f\in K[X,Y]$ for an ultrametric, algebraically closed, complete field $K$ of characteristic $0$,   and assume  that the algebraic series $s(x)$   converges in the closed disk $\{x\in K:|x|\le r\}$, where $r$ is in the value group. Then the radius of convergence of $s(x)$ is strictly larger than $r$, i.e. there exists $r'\in\R$, $r'>r$,  such that $s(x)$ converges in the disk $\{x\in K:|x|<r'\}$.
\end{thm}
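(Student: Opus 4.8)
The plan is to rescale to the closed unit disk and then exploit the hypothesis of convergence on the \emph{closed} disk — as opposed to merely the open one — through reduction modulo the maximal ideal: this forces the reduction of $s(x)$ to be a \emph{polynomial}, and we shall see that such rigidity is incompatible with the radius of convergence being exactly $r$. First I would pick $c\in K^*$ with $|c|=r$ (possible since $r$ lies in the value group) and replace $x$ by $cx$, which scales every branch point and the radius by $r^{-1}$; so we may assume $r=1$. Convergence on $\{z\in K:|z|\le 1\}$ now says precisely that $|a_m|\to 0$, i.e. $s\in K\langle x\rangle$, the Tate algebra of bounded analytic functions on the closed unit disk, with Gauss norm $\|\cdot\|$. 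Suppose for contradiction that the radius of convergence equals $1$, i.e. $\limsup_m|a_m|^{1/m}=1$. Enlarging $K$ (harmless for the statement) we may assume $\|s\|\in|K^*|$; a scalar change of the variable $Y$ in $f$ (which leaves the branch points and the radius unchanged) lets us arrange $\|s\|=1$, and dividing $f$ by a constant we may take $f\in\O\langle x\rangle[Y]$ of content $1$, where $\O$ is the valuation ring.

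Because $|a_m|\to 0$, the reduction $\bar s$ of $s$ modulo the maximal ideal $\mathfrak m$ is a \emph{polynomial} in $\tilde K[x]$ (with $\tilde K=\O/\mathfrak m$), and it satisfies $\bar f(x,\bar s(x))=0$, where $\bar f\in\tilde K[x][Y]$ is the nonzero reduction of $f$. Let $e\ge 1$ be the multiplicity of $Y-\bar s(x)$ as a factor of $\bar f$ over $\tilde K(x)$, so $\bar f=(Y-\bar s)^{e}\,\bar g$ with $\bar g(x,\bar s(x))\ne 0$. The goal is to promote this congruence to an \emph{actual} factorization of $f(x,Y)$ over the Tate algebra $K\langle x/\rho\rangle$ of a strictly larger disk $\{|x|\le\rho\}$, $\rho>1$: such a factorization exhibits a root of $f$ — necessarily $s$ or one of its conjugates — analytic on $\{|x|<\rho\}$, and since $s$ and all its conjugates have the same radius of convergence, this forces $R\ge\rho>1$, the desired contradiction.

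To carry this out one lifts $\bar s$ to a polynomial $P\in\O[x]$ and corrects it. When $e=1$ (i.e. $\partial_Y\bar f(x,\bar s(x))\ne 0$) the natural device is Newton's iteration $P\mapsto P-f(x,P)/\partial_Y f(x,P)$, whose iterates are rational functions whose poles stay confined to a fixed finite union of small disks lying over the branch points inside $\{|x|\le 1\}$; when $e\ge 2$ — the branch already meeting ramification in the reduction — one must instead extract the degree-$e$ Weierstrass factor of $f(x,Y)$ attached to the slope-zero side of its Newton polygon at the Gauss point. In either case the heart of the matter, and the step I expect to be the main obstacle, is to show that the successive corrections decay geometrically \emph{on some disk of radius $>1$}: one has to control the growth of the $x$-degree (equivalently, the pole complexity) of the approximants against their $\mathfrak m$-adic smallness, and it is exactly here that both hypotheses are genuinely used — that $r$ lies in the value group, which is what makes the reduction step available and $\bar s$ a bona fide polynomial, and that $s$ is algebraic of bounded degree $d$, so that the whole approximation is governed by the fixed data of $f$ and its discriminant rather than by runaway complexity. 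An alternative treatment of this step passes through the Fuchsian linear differential equation satisfied by $s$ over $K(x)$ and the (over)convergence of its solutions at the Gauss point, along the lines of Theorem~2.1 of \cite{DR} (our Proposition~\ref{P.DR}).
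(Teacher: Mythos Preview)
Your plan is a recognisable outline of the Bosch--Dwork--Robba strategy (Weierstrass/Hensel factorisation in a Tate algebra of a slightly larger disk), and you are candid that the decisive step --- geometric decay of the Newton corrections on some disk of radius $>1$ --- is left open. But there is an earlier genuine gap: the sentence ``since $s$ and all its conjugates have the same radius of convergence'' is not justified and is in general \emph{false}. The distinct power-series roots of an irreducible $f(x,Y)$ at $x=0$ can have different radii (already over $\C$; cf.\ Figure~\ref{fig:1}), and nothing in your setup rules this out $p$-adically. In the $e=1$ case you can instead argue by \emph{uniqueness} of the Hensel lift in $\O\langle x\rangle$ that the overconvergent root you produce must coincide with $s$ --- so the conjugate claim is unnecessary there --- but in the $e\ge 2$ case this rescue is unavailable: extracting a degree-$e$ Weierstrass factor of $f$ over $K\langle x/\rho\rangle$ does \emph{not} by itself exhibit a root analytic on $\{|x|<\rho\}$, and even if it did, there is no reason that root is $s$. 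A second, smaller issue: the poles of the Newton iterates are the zeros of $\partial_Y f(x,P_n)$, which move with $n$; your assertion that they ``stay confined to a fixed finite union of small disks lying over the branch points'' needs an argument.

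By contrast, the paper's proof is entirely elementary and avoids Tate algebras altogether. After the same rescaling to $r=1$, it normalises differently: it picks a point $x_0$ in the closed unit disk where $|f_Y(x,s_l(x))|$ attains its Gauss norm (for a long truncation $s_l$), translates $x_0$ to the origin, and then rescales $Y$ via $F^*(x,Y)=b^{-2}f(x,s_l(x)+bY)$ with $b=f_Y(0,a_0)$, so that in the new coordinates $f_Y(0,0)=1$ and all coefficients $a_m$ have valuation $\ge h_0$ for $h_0$ as large as we like. The engine is then the explicit recurrence $a_n=-[x^n]\,f(x,s_n(x))$; a short case analysis on the valuations of the linear-in-$Y$ coefficients $c_{1j}$ yields $v(a_m)\ge \varepsilon m+\eta$ for suitable $\varepsilon,\eta>0$, hence radius $>1$. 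This bypasses both obstacles above: no factorisation is needed, and there is no ambiguity about which root one has produced, because one works with the coefficients of $s$ itself throughout.
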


This result appears in \cite{BDR}, actually in a more general form\footnote{That context admits  several variables and coefficients that are themselves series convergent in a larger disk.}  and with a different formulation; see also \cite{C}, Prop. 4.2, for a variation.
We shall provide a different and less demanding proof
with respect to \cite{BDR}.

\medskip

This theorem implicitly provides the following

{\bf Sufficient criterion for the transcendence of the series $s(x)$}: 

 {\it If the coefficients $a_n$ are such that $v_p(a_n)\to \infty$ but $\liminf v_p(a_n)/n=0$ then the series $\sum_{n=0}^\infty a_nx^n$ cannot be algebraic over $K(x)$}. 
 
\medskip

Indeed, the conditions imply that $s(x)$  converges for $\abs{x}_p\leq 1$ but does not converge in any  disk with strictly larger radius, and therefore it must be transcendental. 

The topic of criteria for the transcendence of such series is itself an interesting subject, also recently investigated, e.g.  by Bostan, Salvy, Singer in \cite{BSS}; in this paper  they offer  algorithms for deciding whether certain series are transcendental, and also a wide variety of interesting criteria.
The present criterion seems to escape from those, though admittedly it offers a  limited range of possible applications.

\medskip

Puzzled by this latter kind of results, we wondered about what happens in the case of solutions of Fuchsian linear differential equations (a class which includes algebraic functions).  Again, we have no reference on an analysis of this  simple question; apparently there are results concerning {\it analytic elements}  (see  for instance \cite{Dw}) but we have no knowledge of similar conclusions concerning the case of  analytic power series. 

For solutions of Fuchsian equations,  we have noticed some simple counterexamples to a conclusion as in Theorem \ref{T.BDR}, which we state as the following theorem, limiting to the case $p=2$ for simplicity, and denoting by $|\cdot |_2$ the standard $2$-adic absolute value for this statement.

\begin{thm}\label{T.N2}  There exist $2$-adic integers  $\gamma_1,\gamma_2\in\Z_2$ such that putting $a_m:=\binom{\gamma_1}{m}\binom{\gamma_2}{m} \in\Z_2$, we have $\left |a_m\right |_2\to 0$   as $m\to\infty$, but $\left |a_{m}\right |_2\gg m^{-1}$ for infinitely many $m$.

The series $s(x):=\sum_{m=0}^\infty a_mx^m$ satisfies the differential equation
$$x(1-x) s''(x) + (1+(\gamma_1+\gamma_2-1)x)s'(x)-\gamma_1 \gamma_2 s(x)=0,$$
corresponding to the linear recurrence
$(m+1)^2a_{m+1}=(\gamma_1-m)(\gamma_2-m)a_m.$
Furthermore, the series $s(x)$ and all its powers $s(x)^q$ converge in the $2$-adic `closed' disk $\{x\in\C_2:|x|_2\le 1\}$ but in no strictly larger disk.
\end{thm}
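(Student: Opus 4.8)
The plan is to choose $\gamma_1,\gamma_2\in\Z_2\setminus\Z_{\ge0}$ with carefully engineered binary expansions, and to transfer the whole statement onto a combinatorial assertion about the $2$-adic valuations $v_2(a_m)$. The algebraic side is purely formal: since $\binom{X}{m}\in\Q[X]$ maps $\Z$ into $\Z$ it maps $\Z_2$ into $\Z_2$, so $a_m=\binom{\gamma_1}{m}\binom{\gamma_2}{m}\in\Z_2$ and $a_m\neq 0$; from $\binom{\gamma}{m+1}=\frac{\gamma-m}{m+1}\binom{\gamma}{m}$ one reads off $(m+1)^2a_{m+1}=(\gamma_1-m)(\gamma_2-m)a_m$, and multiplying by $x^m$ and summing (using $xs''+s'=\sum(m+1)^2a_{m+1}x^m$ and $x^2s''+(1-\gamma_1-\gamma_2)xs'+\gamma_1\gamma_2 s=\sum(\gamma_1-m)(\gamma_2-m)a_m x^m$) yields exactly the stated equation; equivalently $s={}_2F_1(-\gamma_1,-\gamma_2;1;x)$. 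I would not prove transcendence of $s$ separately: it follows from Theorem~\ref{T.BDR} once the radius of convergence is known to equal $1$.

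The crucial reduction is Kummer's theorem, applied to finite truncations of $\gamma$ (legitimate since $\gamma\in\Z_2\setminus\Z_{\ge0}$ has infinitely many binary digits, so borrow chains terminate): $v_2\binom{\gamma}{m}$ equals the number of borrows in the base-$2$ subtraction $\gamma-m$; in particular $v_2\binom{\gamma}{2^k}=\lambda_\gamma(k)-k$, where $\lambda_\gamma(k)$ is the least position $\ge k$ carrying digit $1$ of $\gamma$. Hence $v_2(a_m)=v_2\binom{\gamma_1}{m}+v_2\binom{\gamma_2}{m}$ is a function of the digit patterns, at our disposal. I would aim for a choice with \emph{(a)} $v_2(a_m)\to\infty$, and \emph{(b)} an extremely sparse sequence $m_1<m_2<\cdots$ (say $m_{i+1}>2^{m_i}$) with $v_2(a_{m_i})\to\infty$ but $v_2(a_{m_i})\le\floor{\log_2 m_i}$, and such that at $m_i$ the only other coefficients of comparably small valuation are those indexed near $m_i/2$. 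Granting (a) and (b): from (a), $\abs{a_m}_2\to0$, so $s$ — and therefore every power $s^q$, as the power series over $\C_2$ with coefficients tending to $0$ form a ring — converges on $\{\abs{x}_2\le1\}$; with (b) this is the asserted behaviour of $(a_m)$, and Hadamard's formula $R=2^{\liminf v_2(a_m)/m}$ together with $v_2(a_{m_i})/m_i\to0$ shows $s$ converges in no larger disk. For $s^q=\sum c_mx^m$, expanding the convolution at $m=m_i$ and splitting off the $q$ terms in which one index is $m_i$ gives $c_{m_i}=q\,a_{m_i}+(\text{terms with }\ge 2\text{ nonzero indices})$; by (b) and the sparseness of the $m_i$, each term of the second kind has $2$-adic valuation exceeding $v_2(q)+v_2(a_{m_i})$ once $i$ is large, so $v_2(c_{m_i})=v_2(q)+v_2(a_{m_i})=o(m_i)$ and $s^q$ too has radius exactly $1$.

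For the construction I would take $n_1=1$, $n_{i+1}=2^{n_i+i}$, let $\gamma_1$ carry digit $1$ exactly at the positions $n_i$ and $\gamma_2$ digit $1$ exactly at the positions $n_i+i$; these positions are all distinct, the two members of the $i$-th pair lie at distance $i$, and consecutive pairs lie at distance about $2^{n_i+i}$. With $m_i=2^{n_i}$ one gets $v_2\binom{\gamma_1}{m_i}=0$, $v_2\binom{\gamma_2}{m_i}=i$, hence $v_2(a_{m_i})=i\le n_i=\floor{\log_2 m_i}$ and $m_{i+1}=2^{n_{i+1}}\gg2^{m_i}$ — this gives (b), the ``near $m_i/2$'' clause coming from the near-valley at $2^{n_i-1}$, where $v_2\binom{\gamma_1}{2^{n_i-1}}=1$ and $v_2\binom{\gamma_2}{2^{n_i-1}}=i+1$. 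For (a) one argues on the highest $1$-digit of $m$, at position $p$: unless $p$ sits just below one of the positions $n_k$ or $n_k+k$, one of $\gamma_1-m,\gamma_2-m$ has a borrow chain running up to the next $1$-digit of that $\gamma_j$, of length of order $2^{n_k+k}$; and if $p$ sits just below some $n_{k+1}$ — the only remaining case, up to the symmetric one with $n_{k+1}+(k+1)$ — then $\gamma_2-m$ still has a chain of length about $k+1$. In every case $v_2(a_m)\to\infty$.

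The main obstacle is the clean proof of (a): because the number of borrows is not monotone in the subtrahend, one must rule out that the lower digits of $m$ cancel these chains. The route I would take is a lemma of the shape ``$v_2\binom{\gamma}{m}\ge(\text{least }1\text{-digit of }\gamma\text{ strictly above }p)-p-1$ whenever $\gamma\bmod 2^{p+1}<m$'', with $p=\floor{\log_2 m}$, combined with the remark that for our lacunary $\gamma_j$ the inequality $\gamma_j\bmod 2^{p+1}<m$ fails precisely when the top digit of $m$ coincides with a $1$-digit of $\gamma_j$ — the valley situation, where the companion factor supplies the valuation. The rest — the formal identities, the Tate-algebra remark, the Hadamard computation, and the convolution bookkeeping for $s^q$ (which uses only the doubly-exponential sparseness of the $m_i$ and $v_2(a_{m_i})\to\infty$) — is routine.
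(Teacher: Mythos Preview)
Your overall architecture---Kummer's carry--count plus a lacunary choice of the binary supports of $\gamma_1,\gamma_2$---is exactly the paper's, and your proposed lemma ``$v_2\binom{\gamma}{m}\ge(\text{least $1$-digit of }\gamma\text{ above }p)-p-1$'' is a variant of the paper's Lemma~\ref{lem:2valuation}. The paper's concrete choice is simpler than yours: it takes a \emph{single} increasing sequence $(a_n)$ with $a_{n+1}-a_n\to\infty$ (e.g.\ $a_n=n^2$) and sets $\gamma_1=\sum 2^{a_{2i}}$, $\gamma_2=\sum 2^{a_{2i+1}}$; then Proposition~\ref{prop:2val} gives at once both $v_2(a_m)\to\infty$ and $v_2(a_{2^{a_j}})=a_{j+1}-a_j$, so (a) and (b) fall out uniformly from one estimate, without your case split on where $p$ sits relative to the pairs $(n_k,n_k+k)$.

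The real divergence is in the treatment of the powers $s^q$. The paper does \emph{not} do convolution bookkeeping at all. Instead it arranges (by taking $a_0=0$, $a_1\ge 2$) that $s(x)\equiv 1\pmod 4$, and proves a clean general lemma (Lemma~\ref{lem:radius_powers}): for any $s=1+4\delta$ with $\delta\in\Z_2[[x]]$, $\delta(0)=0$, the radius of $s$ equals that of every $s^{2^k}$. The point is that $s^{2^k}=1+2^{k+2}\Delta$ with $\Delta\in\Z_2[[x]]$, and conversely $s=\sum_m\binom{1/2^k}{m}(2^{k+1}\Delta)^m$ with $\binom{1/2^k}{m}2^{(k+1)m}\in\Z_2$; so passage in either direction preserves a linear lower bound $v_2(\text{coeff})\ge\alpha n$. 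This costs nothing beyond $s\equiv 1\pmod 4$ and is independent of the fine structure of $(a_m)$.

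Your convolution route, by contrast, is not closed by the proposal. To get $v_2(c_{m_i})=v_2(q)+v_2(a_{m_i})$ you need every tuple $(j_1,\dots,j_q)$ with $\sum j_k=m_i$ and $\ge 2$ nonzero entries to satisfy $\sum_k v_2(a_{j_k})>v_2(q)+i$. Your (b), even with the ``near $m_i/2$'' clause, is a statement about \emph{where} the small valuations occur, not a uniform \emph{lower bound} on $v_2(a_j)$ elsewhere; and the deepest valleys below $m_i$ are actually at $m_1,\dots,m_{i-1}$ (values $1,\dots,i-1$), not near $m_i/2$. The argument is salvageable---for instance one can show $v_2(a_j)\ge S_2(j)$ (since the $1$-digit supports of $\gamma_1,\gamma_2$ are disjoint, every $1$-digit of $j$ forces a borrow in at least one of the two subtractions), and combine this with your Kummer bound to see that when some $j_k$ is small the complementary $m_i-j_k$ has $\asymp n_i$ ones and hence enormous valuation---but this quantitative input is precisely what the sketch omits, and without it the inequality ``$>v_2(q)+i$'' does not follow from ``(b) and sparseness'' alone. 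Note also that your specific $\gamma_1,\gamma_2$ give $v_2(a_2)=1$, so $s\not\equiv 1\pmod 4$ and the paper's lemma would not apply directly to your series; if you want the cleaner route, shift the supports so that $\gamma_1$ is odd and $v_2(\gamma_2)\ge 2$.
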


\medskip

Note that the assertion about the powers $s(x)^q$ is indeed stronger than for the mere series $s(x)$, since the radius of convergence can increase upon taking a power, as shown by examples like $(1+x)^{1/p}$ over $\C_p$. (In the present case  only the exponents $q=2^h$ are significant, since one can take roots of odd order over $\C_2$ without introducing denominators.)


\medskip

\begin{rem}
We remark that the series $s(x)$ of Theorem~\ref{T.N2} is the expansion of the hypergeometric function ${}_2F_1(-\gamma_1,-\gamma_2;1;z)$.
Such series
are Fuchsian by general theory, and the exponents are easily computed in terms of $\gamma_1,\gamma_2$:  see \cite{DGS}, Ch IV. 

The proof of Theorem \ref{T.N2} will show that there is plenty of freedom in the choice of this counterexample, and the construction can be easily modified and extended. The $2$-adic integers   $\gamma_1,\gamma_2$  in question, provided by the proof,  are most probably transcendental, so they do not yield `global' examples; to prove this transcendence appears  at first sight to lie outside the presently known techniques.
{\it A fortiori},  it seems very hard to prove independently that there are not such examples over $\overline\Q$, since implicitly this would provide such a transcendence proof.

We also note that  the proof allows $\gamma_1,\gamma_2$ with the required properties to be chosen algebraically independent. However by specializing them to algebraic numbers near them will not generally preserve the property. Hence there is no continuity for such a kind of convergence, differently for instance to what happens for the radius (as a function of the center), as proved in \cite{B}.
\end{rem}

\medskip

Related (but different) functions have appeared previously in connection with the subject of $p$-adic convergence, in particular on the boundary of the disk of convergence. See for example \cite{Katz}, p. 74, where a ratio of hypergeometric functions $F(1/2, 1/2, 1, t)/F(1/2, 1/2, 1, t^p)$ is considered;  a corresponding series converges on a nonempty open subset of the boundary of the disk of convergence, related to supersingular values of the Legendre parameter (relative to the prime $p$). This is significant in relation to the monodromy at infinity.
  In particular it turns out that the corresponding series  does not converge on the full boundary.

\medskip

\medskip

One might ask whether a similar example exists for linear differential equations of order $1$, i.e. $s'(x)=u(x)s(x)$, where $u\in K(x)$.  See \S~\ref{R.N2} after the proof of Theorem \ref{T.N2} for a brief discussion, and Proposition \ref{P.N2} therein, concluding in particular that
no such example exists:
this is an analogue of Theorem \ref{T.BDR} for the series in question.

 \medskip
 
 \noindent{\bf Acknowledgements}. We thank Yves Andr\'e, Francesco Baldassarri, Nickolas M. Katz, and Hector Pasten, for their kind attention, comments, and references.

\medskip

\section{Some notation and background} \label{S.notation} For the sake of completeness and full clarity, we now explain in more detail and self-contained terms  the meaning of our problems and assertions.

As a reference for basic properties of algebraic functions, we can indicate e.g. Lang's book \cite{L}.
For basic and deeper facts on $p$-adic valuations, norms and $p$-adic analysis, we refer to  the classical books \cite{Am} or \cite{DGS}.

 \medskip
 
Concerning the field $K$, we assume as above that it is of characteristic zero, algebraically closed and complete with respect to a nontrivial absolute value $|\cdot |$.

The \emph{value group on $K$} is the set $|K^*|=\{|x|:x\in K^*\}\subset \R_{> 0}$; $K$ being algebraically closed implies that $|K^*|$ is a dense subset of $\R_{> 0}$.

When the absolute value is ultrametric we usually denote by $\O=\{x\in K:|x|\le 1\}$ the valuation ring and by $\mathfrak{M}=\{x\in K:|x|<1\}$ the maximal ideal.
We shall use the standard notations

\begin{align*}
 D(a,T^-)&:=\{x\in K:|x-a|<T\}, &  D(a,T^+)&:=\{x\in K: |x-a|\le T\}.
\end{align*}

\medskip
 
We will also refer to the \emph{Gauss norm} on the field of rational functions $K(X)$. This can be characterized as the unique extension of $|\cdot |$ to $K(X)$ such that the reduction of $X$ is transcendental over the residue field of $K$. For two polynomials $P(X)=\sum_i P_i X^i$ and $Q(X)=\sum_i Q_i X^i$ in $K[X]$, this can be concretely described as
$$\abs{P(X)/Q(X)}_G=\underset{i}{\max}\{|P_i|\} / \underset{i}{\max}\{|Q_i|\}.$$

 \medskip
 
Concerning the polynomial $f(X,Y)\in K[X,Y]$, we assume it is irreducible, and we denote by $d>0$ its degree in $Y$, which is then the degree $[K(x,s(x)):K(x)]$ of $s(x)$ as an algebraic element over the field $K(x)$.   

\medskip

The equation $f=0$ defines an irreducible plane algebraic curve ${\mathcal Z}\subset \A^2$, and we let $\widetilde{\mathcal Z}$ denote a smooth complete model of $\mathcal Z$, so that the function field of $\widetilde{\mathcal Z}$ over $K$ is $K(\widetilde{\mathcal Z})=K({\mathcal Z})\cong_K K(x,y)$, where $x$ is transcendental over $K$ and $f(x,y)=0$.

Also, $K(x,y)$  is embedded (over $K$)  through $y\mapsto s(x)$ into the field  $K((x))$ of formal power series, and it is isomorphic over $K(x)$ to $K(x,s(x))\subset K((x))$.
Note that the degree $\deg(x)$ of $x$ viewed as a rational function on $\widetilde{\mathcal Z}$ (namely the weighted number of its poles)  is $d=[K(\widetilde{\mathcal Z}):K(x)]$.\footnote{In particular, the integer $d$, which is the {\it degree of $s(x)$ as an algebraic element over $K(x)$},  must not be confused with {\it the degree of $s(x)$ as a function on $\widetilde{\mathcal Z}$}, which equals the degree of $f$ with respect to $X$ (not $Y$).
}

After multiplying  $s(x)$ by the leading coefficient of $f(X,Y)$ as a polynomial in $Y$, we may assume at the outset  that $f$ is monic in $Y$ (we have removed the `polar' singularities of $s(x)$).   This amounts to suppose that  $s(x)$  is integral over $K[x]$.

With such normalizations, if $d=1$ then $s(x)$ is a polynomial, and vice versa, so we will assume in the sequel that $d>1$.  In this case the function $x$ on $\widetilde {\mathcal Z}$, being of degree $d>1$,  will have some (finite) branch points, i.e. critical values.\footnote{If $x_0\in K$ is a  branch point then $f(x_0,Y)$ has some multiple root, but this is not a sufficient condition; consider e.g. the simple example $f(X,Y)=Y^2-X(X-x_0)^2$, $x_0\neq 0$ (what fails here is the smoothness of the model).}  This  existence is well known and follows easily from the Riemann-Hurwitz formula;  in the complex case it is a consequence of the fact that $\C$ is simply connected; it  will also follow implicitly from some of the arguments below.

\medskip

Now, if $x_0\in K$, let $p_1,\dotsc ,p_r$ be the points of $\widetilde{\mathcal Z}$ {\it above $x_0$}, i.e. with with $x(p_i)=x_0$. Then there are integers $e_1,\dotsc ,e_r\ge 1$ (the {\it ramification indices} of the $p_i$) such that the solutions of the equation $f(x,Y)=0$ may be expressed as {\it Puiseux series} $y=s_i(u)\in K[[u]]$, where $u^{e_i}=x-x_0$, convergent for $u$ in a neighborhood of $0$. We have $e_1+\dotsb +e_r=d$. Similarly if we work on $\P_1(K)$ and $x_0=\infty$; in this case the solutions may be expressed as Puiseux series {\it at infinity} $y=s_i(u)$ where $u^{e_i}=1/x$, the series converging for large enough $|u|$. We remark that
the Puiseux series at infinity in fact are {\it Laurent series}, i.e. they may contain (finitely many)  terms with {\it negative} exponent of $u$; this cannot happen at finite points due to the assumption that $f$ is monic in $Y$.

\medskip

 By definition, the value $x_0$ is not a branch point if and only if $r=d$, so $e_i=1$ for each index $i$. Then there are $d$ distinct points $p_1,\dotsc ,p_d$ of $\tilde X$ with $x(p_i)=x_0$, and the equation $f(x,Y)=0$ will have $d$ distinct power series solutions $y=s_i(x-x_0)\in K[[x-x_0]]$ convergent for $x$  in some  neighborhood of $x_0$.  If $x_0$ is a branch point and $e_i>1$, then the point $p_i\in\widetilde{\mathcal Z}$ is said to be {\it ramified} above $x_0$.

 \medskip
 
  In the complex case this convergence provides a process of analytic continuation of the original series $s(x)$, which will go on throughout  any simply connected region, as soon as we do not find branch points in it. 
 As a consequence it may be deduced (see below for explicit arguments) that  {\it for $K=\C$ the open disk of convergence of $s(x)$ cannot  contain all the (finite) branch points $x_0\in\C$}. This yields a negative answer to the above Question, proving the case \ref{T.Y:i} of Theorem \ref{T.Y} (of course this goes back to centuries).
 
 On the other hand, if the absolute value of $K$ is ultrametric (for instance if $K$ is the completion $\C_p$ of an algebraic closure of $\Q_p$) the analytic continuation is not anymore possible, hence the argument sketched for $\C$ fails.
 
 \medskip

\section{Proof of Theorem~\ref{T.Y}}

In this section we work under the normalizations performed in \S~\ref{S.notation}, in particular assuming that $f(X,Y)$ is monic in $Y$. This does not affect the statement.

\subsection{Proof of Theorem~\ref{T.Y}\ref{T.Y:i}}
 We start with part \ref{T.Y:i}, namely proving that the Question has a negative answer when $K=\C$.  As said, this case  is somewhat implicit in the standard theories, but lacking an explicit reference we are inserting a complete argument.

\medskip

We assume that the series $s(x)$ is convergent in a disk $D(0,R^-)$ containing all the finite branch points of $x$, and have to prove that $s(x)$ is a polynomial. Since there are only finitely many   branch points, our assumption yields that   any finite branch point $\rho$ is such that $|\rho|<r<R$ for a suitable $r>0$.

For a real number $R_1\ge R$ consider the open region $A=A(R_1):=\{z\in \C\setminus\R^+: r<|z|<R_1\}$. This is clearly simply connected and does not contain any branch point.  The convergence assumption and the fact that the polynomial $f$ is monic in $Y$  entail that  the function $s(x)$ is analytic and well defined in the annulus   $\Omega: r<|z|<R$, and  is bounded there. Also, $A\cap\Omega$ is connected, and therefore  $s(x)$ may be analytically continued throughout  $A$, hence throughout the union of $A$ with $\Omega$.   By the same reason, it can be analytically continued in the  union of $\Omega$  with the open (simply connected) rectangle  $\{z\in\C: r<\Re z<R_1, |\Im z|<\epsilon \}$, for  $\epsilon >0$.

The analytic continuations obtained through these two connected regions will  coincide on their intersection, which is connected and contains a  nonempty open  set (e.g. a suitable neighborhood of the interval $(r,R)\subset \R$), thus this  will provide an analytic continuation to the union of the regions, which contains the connected annulus $r<|z|<R_1$. Again, since the function $s(x)$ is analytic in the disk $|z|<R$, the  continuation that we have obtained extends to the disk $|z|<R_1$. Since $R_1>R$ is arbitrarily large, this yields that $s(x)$ may be continued to an analytic function on the whole $\C$.
 
But since it satisfies  the monic equation  $f(x,s(x))=0$, it is subject to a bound $|s(x)|\le c_1|x|^l+c_2$, provided $c_1,c_2,l$ are sufficiently large positive constants.  By Cauchy's integral formula  $a_m=(2\pi i)^{-1}\int_{|z|=T}s(z)z^{-m-1}\d z$ (take any $T>1$);  this implies that $|a_m|\le (c_1+c_2)T^{l-m}$ for all $T>1$ and hence $a_m=0$ for $m>l$,\footnote{Of course this argument is Liouville's theorem.} so that $s(x)$ is a polynomial of degree $\le l$.\qed

\begin{rem}
There are other equivalent ways to formulate the argument with analytic continuation.

For instance one can observe at once that the   continuation to $A$ indeed extends to the whole ring $r<|z|<R_1$. In fact, travelling through a circle  $|z|= c$, (where $r<c<R_1$) from $c$ will lead to the same value for $s(z)$ after one complete round, because this holds for $c<R$ by assumption, and must then hold always by continuity.

A further somewhat different argument can be obtained as in the proof that we shall offer for  the $p$-adic case; this argument in the complex case may be sketched as follows.  Replacing $x$ with a positive power $x^e$ does not affect the issue, so we may assume that $\infty$ is not ramified. Then we may analytically continue the restriction of  $s(z)$ to  $ r<|z|<R$, to the whole simply connected domain $r<|z|\le\infty$ in $\P_1(\C)$.  This continuation must fit with the function  $s(z)$ defined by the series for $|z|<R$, because the intersection of the domains $|z|<R$ and $r<|z|$ is connected.

Finally, note that any of these arguments proves as a byproduct that in the complex case there must exist finite branch points as soon as $d\ge 2$.
\end{rem}

\subsection{Proof of Theorem~\ref{T.Y}\ref{T.Y:ii}}
We now go ahead with part~\ref{T.Y:ii}, which is one of the main points of the present paper. As mentioned in the introduction, we apply a theorem of Dwork and Robba, and precisely Theorem 2.1 of \cite{DR}.
For the reader's convenience we state here the modification that we need, which was already stated, proved,  and used, in the paper \cite{Z3} by the second author: see Proposition~4.1 therein.\footnote{There are subsequent more sophisticated results of Dwork, Robba, and others, which could be invoked but, to keep the pre-requisites to a minimum, we prefer to use this elementary version.}

For the statement we keep the previous notations and further denote by $u$ a variable over $K$, and by $\widehat{K(u)}$ the completion of $K(u)$ with respect to the Gauss norm on $K(u)$.

\begin{prop}[\cite{DR}, Thm. 2.1 and \cite{Z3}, Prop. 4.1.]\label{P.DR}    Let $Q(Z)=Q(u,Z)\in K[u,Z]$ be irreducible of degree $d>0$ in $Z$, and suppose that at each $u_1\in D(0,1^-)$, except possibly at $u_1=0$, the equation $Q(Z)=0$ has $d$ distinct locally analytic solutions.

Factor $Q$ over $\widehat{K(u)}$ and consider the finite extensions of $\widehat{K(u)}$ corresponding to the various irreducible factors. Assume that each such extension is tamely ramified and that the corresponding residue field extension  is separable.

Then the solutions of  $Q(Z)=0$ at $u=0$ are of the form
\begin{equation}\label{E.DR}
Z=u^{-m/e}\xi(u^{1/e}),
\end{equation}
for an integer $m\ge 0$, where $e$ is some ramification index above $u=0$ of the function field extension of $K(u)$ defined by $Q$, and where $\xi\in K[[U]]$ is a power series whose radius of convergence is at least $1$.
\end{prop}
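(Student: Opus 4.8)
The plan is to strip off the ramification by a root substitution, to recognise that the assertion is really the statement that a formal Puiseux solution — a priori convergent only in a tiny disk — actually converges throughout the whole unit disk, and then, since $p$-adic analytic continuation is unavailable, to replace the continuation argument by an explicit estimate on the coefficients; that estimate is where the tameness hypotheses are spent. Concretely, in \textbf{Step 1} I would reduce to a single Puiseux cycle. At $u=0$ the roots of $Q(Z)=0$ split into Puiseux cycles with ramification indices $e_i$, $\sum_i e_i=d$, the $i$-th cycle consisting of the $e_i$ Galois conjugates of a series $Z=u^{-m_i/e_i}\xi_i(u^{1/e_i})$ with $\xi_i\in K[[U]]$, permuted by $u^{1/e_i}\mapsto\zeta_{e_i}u^{1/e_i}$; the pole part is finite ($m_i<\infty$) because $Z$ is algebraic. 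It suffices to prove the radius bound for one such cycle, so I fix it, write $e=e_i$, put $t=u^{1/e}$, and am reduced to showing that $\xi_i$ — equivalently, with the same radius of convergence, $W:=t^{N}Z\in K[[t]]$ for $N\ge m_i$ — satisfies $\limsup_n|b_n|^{1/n}\le 1$, where $\xi_i(t)=\sum_n b_n t^n$.

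\textbf{Step 2 — isolating the real content.} Since the pole part of $Z(t)$ is finite, the bound $\limsup_n|b_n|^{1/n}\le 1$ is equivalent to the assertion that the formal Laurent series $Z(t)$ converges at every point of $\{0<|t|<1\}$. Over $\C$ this is automatic: the substitution $u=t^e$ has trivialised the local monodromy of the branch, so $Z(t)$ continues analytically throughout the punctured disk — which is free of branch points by hypothesis — and, being meromorphic at $0$, its (finite) Laurent expansion converges there; this is exactly the analytic-continuation argument used above for part~\ref{T.Y:i} of Theorem~\ref{T.Y}. The proposition asserts the same conclusion $p$-adically, where there is no analytic continuation, so a substitute mechanism is needed.

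\textbf{Step 3 — the mechanism, and the obstacle.} The tameness and separable-residue hypotheses are used to normalise the situation at the Gauss point. Scaling $Q$ so that its coefficients are integral and its reduction $\bar Q\in k[u,Z]$ is nonzero, these hypotheses say that every irreducible $\widehat{K(u)}$-factor gives a tamely ramified extension with separable residue extension, which is precisely the condition under which the reduced cover stays as well-behaved (separable, and unramified away from $\bar u=0$ — this last coming from the hypothesis of $d$ distinct local solutions over the punctured disk) as the cover in characteristic $0$. One then feeds this into the linear differential equation $L(Z)=0$ that $Z$ satisfies over $K(u)$ (which is Fuchsian with rational exponents), or directly into the linear recursion for the $b_n$ coming from $Q(t^e,t^{-m}\xi_i)=0$: the denominators that occur are essentially products of factors $n-\lambda$ with $\lambda\in\tfrac1e\Z$, and because $e$ is prime to the residue characteristic their $p$-adic absolute values grow only polynomially in $n$, which suffices to propagate the bound $|b_n|^{1/n}\le 1+o(1)$. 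This coefficient estimate — the $p$-adic surrogate for analytic continuation — is the main obstacle, and it is also exactly where the hypotheses are indispensable: when the residue extension is inseparable, or the ramification is wild, the relevant exponents or denominators acquire factors of $p$, the estimate collapses, and so does the conclusion. The example $Z^p=1+u$ over $\C_p$ already illustrates this: there $d=p$ distinct locally analytic solutions exist throughout $0<|u|<1$, yet $\xi(u)=(1+u)^{1/p}$ has radius of convergence $|p|^{p/(p-1)}<1$, the point being that the residue extension $z^p=1+\bar u$ is inseparable over $k(\bar u)$, so this $Q$ violates the hypotheses.
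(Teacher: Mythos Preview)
The paper does not prove this proposition at all: it is quoted as Theorem~2.1 of \cite{DR}, in the sharpened form of Proposition~4.1 of \cite{Z3}, and the only commentary the paper offers is the remark following the statement, to the effect that the tameness/separability hypotheses are what the Dwork--Robba argument actually uses (and become automatic when $p>d$). So there is no in-paper proof for you to be compared against.

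As to your sketch itself: Steps~1 and~2 are a correct reduction and a correct identification of the content. Step~3 names the right mechanism --- the auxiliary linear differential equation satisfied by an algebraic $Z$, whose local exponents lie in $\tfrac{1}{e}\Z$ --- and correctly locates where the tameness and separable-residue hypotheses are consumed. This is indeed the shape of the argument in \cite{DR}. However, Step~3 remains a sketch rather than a proof: the claim that ``denominators grow only polynomially in $n$'' is not by itself enough, and the actual estimate in \cite{DR} is more delicate. The Dwork--Robba argument does not simply bound a naive recursion term by term; it passes through the \emph{generic radius of convergence} of the associated differential operator (the hypothesis of $d$ distinct local analytic solutions at every $u_1\neq 0$ in $D(0,1^-)$ forces this generic radius to be $\ge 1$), and then transfers that information to the formal solution at $u=0$ via a Wronskian/variation-of-parameters formula. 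The tameness hypothesis is used to ensure that the exponents at $0$ do not differ by $p$-adically small rationals, which is what keeps the transfer lossless. Your heuristic ``$|n-\lambda|_p$ is not too small because $e\lambda\in\Z$ and $(e,p)=1$'' captures part of this, but the full argument needs the differential-operator machinery, not just the algebraic recursion coming from $Q(t^e,t^{-m}\xi)=0$; the latter route, taken literally, would run into nonlinear terms whose control is not obviously polynomial. If you intend to write a self-contained proof, you will need to reproduce the \cite{DR} transfer estimate rather than assert it.
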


\medskip

\begin{rem}
Theorem~2.1 of \cite{DR} has the same conclusion but contains merely the assumption that ($p=0$ or) $p>d$, and does not even mention the factorization of $Q$ over   $\widehat{K(u)}$. In fact, the possible factors will have degree $\le d$, so the constraints about separability and tame ramification become automatic on assuming $p>d$. However a little inspection of  the arguments in \cite{DR} readily  leads to the present refinement.  That some condition on $p$ is needed is observed in \cite{DR}, where the simple example $Q(u,Z)=Z^p-(1+u)$ appears; in that case we have $p=d$, the assumption about the local solutions is fulfilled, but the radius of convergence is $<p^{-1}$ (in fact, it is $p^{-p/(p-1)}$). Our situation is different and the (counter)examples that we give offer implicitly different (counter)examples concerning the theorem of Dwork and Robba.
\end{rem}

\medskip

Before applying Proposition~\ref{P.DR}, we perform some normalizations, and for simplicity we  first discuss the case $p>d$.

\medskip

As before,  we assume that the series $s(x)$ is integral over $K[x]$,  that it is convergent in a disk $D(0,R^-)$, and that all the finite branch points of $x$ have absolute value $<R$; we have to prove that $s(x)$ is a polynomial. 

\medskip

By a rescaling $x\mapsto \mu x$, where $\mu\in K^*$ is such that $\abs{\mu}$ is a bit smaller than the radius of convergence, we can suppose that $s(x)$ converges in $D(0,r^-)$ for an $r>1$, and that the branch points have absolute value $<1$: indeed the rescaling divides  the radius and the branch points by $\mu$.  Also, by changing $x$ with a suitable positive power $x^l$, we can suppose that
$\infty$ is not a branch point: it suffices to choose $l$ as a positive multiple of all the ramification indices at $\infty$ (so e.g. $l=d!$ will do). Note that this substitution will possibly increase the number of branch points\footnote{The `new' branch points will be the $l$-th roots of the `old' ones.} but their absolute values will remain $<1$.  After this normalization, let then $\rho <1$ be an upper bound for the absolute values of the branch points.

\medskip

Put $u=1/x$ and consider the polynomial $Q(u,Z)=u^hf(1/u,Z)$, where $h$ is the minimal integer so that $Q\in K[u,Z]$ is a polynomial, which will then be irreducible.  Now, the finite branch points $u_0$ of $u$ are of the shape $u_0=1/x_0$, where $x_0$ is a nonzero branch point of $x$. Hence $|u_0|\ge 1/\rho>1$. Then, since the disk $D_u(0,1^-):|u|<1$ has no branch points, the solutions of $Q(u,Z)=0$ around any $u_1$  in that disk will be given locally by $d$ distinct power series in $u-u_1$, with nonzero radius of  convergence. 

Therefore, since we are assuming that $p>d$, the other assumptions for Proposition \ref{P.DR} are automatically satisfied; the conclusion is that the solutions of $Q(u,Z)$ at $u=0$ are given by Laurent series of the shape $u^{-m_i/e_i}\xi_i(u^{1/e_i})$, where the $\xi_i$ are power series with radius of convergence at least $1$. Now, since $\infty$ is a not a branch point of $x$, we deduce that $0$ is not a branch point of $u$, whence we may take each $e_i=1$, and we shall have $d$ distinct solutions of the said shape, namely of the form $\theta_i(x^{-1}):=u^{-m_i}\xi_i(u)=x^{m_i}\xi_i(x^{-1})$.   

These last series are Laurent series in $1/x$ with only finitely many terms with positive exponent of $x$.  By Proposition \ref{P.DR}, they are convergent for $0<|u|<1$, i.e. for $|x|>1$.  In practice, the proposition implies that the Puiseux series at $x=\infty$ are convergent for $|x|>1$.

Now, in the annulus $1<|x|<r$, the polynomial $f(x,Z)$ factors
as $\prod_{i=1}^d(Z-\theta_i(x^{-1}))$.  But, in the same annulus,  we may evaluate this at $Z=s(x)$  obtaining $\prod_{i=1}^d(s(x)-\theta_i(x^{-1}))=0$. But the Laurent series convergent in the annulus form a domain, and then we deduce that there exists an index $j$ such that the equality $s(x)=\theta_j(x^{-1})$  holds for all $x$ in the annulus, and in fact, equivalently,  holds identically.  However $s(x)$ is a power series, whereas $\theta_j(x^{-1})$ has only finitely many terms with positive exponent of $x$, and therefore $s(x)$ is a polynomial.

\begin{rem}
This argument works for $K=\C$ as well, a case for which  the Proposition is in fact part of standard knowledge, and does not need assumptions concerning tame ramification or separability.

For a similar reason, it also works for ultrametric $K$ with residue field of characteristic $0$, although the (subtler) arguments of Dwork and Robba \cite{DR} are needed with respect to the case of $\C$. In any case, this leaves us only with the case of residue field of characteristic $p=d$, which we are now going to consider.
\end{rem}

Now suppose that $p=d$. We start with the same normalization as above, hence supposing that $s(x)=\sum_{m=0}^\infty a_mx^m$ converges in $D(0,r^-)$ where $r>1$, and supposing that every branch point has absolute value $\le \rho<1$.

As in the previous argument, we set $u:=1/x$ and we consider  the polynomial $Q(u,Z)=u^hf(1/u,Z)$, defined above. Now, since $s(x)$ has radius of convergence $>1$, the coefficients $a_m$ tend to $0$. Hence  $s(x)$ may be considered an element of  the completion $\widehat{K(u)}=\widehat{K(x)}$ of $K(u)=K(x)$ with respect to the Gauss norm; in fact $|s(x)-\sum_{m=0}^na_nu^{-n}|_G\le \sup_{l>n}|a_l|\to 0$ as $n\to\infty$, so $s(x)$ is a limit (in the Gauss norm) of a sequence in $K(u)$. 

 Therefore $Q(u,Z)$ has the linear factor $Z-s(x)$ over $\widehat{K(u)}$, the quotient being a polynomial of degree $d-1<p$  in $Z$. We conclude that the extensions mentioned in the statement of Proposition \ref{P.DR} have degree $\le d-1<p$ and therefore are neither wildly ramified nor the residue field extension can be inseparable.

But then we may apply Proposition~\ref{P.DR} as in the case $p>d$ and conclude the proof in the same way as above.\qed

\subsection{Proof of Theorem~\ref{T.Y}\ref{T.Y:iii}}
Now suppose that there exists $x_0\in K$ such that all the ramification indices above it are $>1$; this implies $d>1$, and we have to prove that it is not possible that $s(x)$ converges in a disk $D(0,R^-)$ containing all the finite  branch points.

This is indeed very easy. The assumption entails  that  $|x_0|<R$, and then we could write $s(x)=\sum_{m=0}^\infty \frac{s^{(m)}(x_0)}{m!}(x-x_0)^m$, the series being again convergent for $|x-x_0|<R$.  But this expansion yields a Puiseux series   at $x=x_0$ with ramification index $1$, contrary to assumptions.\qed

\subsection{Proof of Corollary~\ref{C.Yii}}
 Let $R$ be the radius of convergence of $s(x)$. In view of the assumptions of the Corollary, we may apply Theorem~\ref{T.Y}, so there is a finite branch point $\rho$  with $R\le |\rho|$ (in particular $R$ must be finite).\footnote{The finiteness of $R$ does not depend on Theorem~\ref{T.Y}: for otherwise $s(x)$  would be necessarily a polynomial since it is analytic on the whole $K$ and is bounded polynomially for $|x|\to\infty$. The conclusion then follows as in Liouville  classical result over $\C$; see for this e.g. \cite{Am} or \cite{DR}.}

 Let then $\rho$ be such a branch point of minimum absolute value. If $|\rho|=R$ we are done, and similarly if there exists a branch point $\beta$ with $|\beta|\le  R$. 
 So assume $|\rho|>R$ and that there does not exist any such $\beta$, and  put $s_1(x)=s(\rho x)$. Then the equation $f(\rho u,Z)=0$ has $d$ distinct locally analytic solutions for $u\in D(0,1^-)$. 
 But then, in view of the present assumptions, we may apply Proposition \ref{P.DR} to the polynomial $Q(u,Z):=f(\rho u,Z)$.\footnote{Observe that in case  $p=d$ the same argument as in the proof of Theorem \ref{T.Y}\ref{T.Y:ii} would not work, since now  we lack the factor $Z-s(x)$ of $f(x,Z)$ over $\widehat{K(x)}$.} We conclude that the radius of convergence of $s_1$ is at least $1$, which implies that the radius of convergence of $s(x)$ is at least $|\rho|$, a contradiction which proves the statement.\qed
 
 \begin{rem}
We remark again that in the case of $\C$ the proof of Theorem~\ref{T.Y} gives the more precise conclusion that $R=|\rho|$ for some branch point $\rho$, which in the ultrametric case is not in general true.
 \end{rem}

\section{Proof of Theorem~\ref{T.N}}

We begin with part \ref{T.N:i} and with an example for $p=2$ that admits a very explicit description.

\subsection{An example for \texorpdfstring{$p=2$}{p=2}}
Consider the series satisfying the algebraic equation
\[
 y^3-3y=2-x
\]
with $y(0)=2$.
The branch points are at $x=0$ (above which there are a ramification point with $y=-1$ and an unramified point with $y=2$) and at $x=4$ (above which there is the ramification point with $y=1$ and an unramified point with $y=-2$).

We show in two ways that this series belongs to $\Z_2[[x]]$. In particular this implies that it converges for $\abs{x}_2 < 1$ which is an open disk that contains both finite branch points, thus proving the statement.

\subsubsection{Lift via Hensel's lemma}
We will construct the series  $\sum s_nx^n$ iteratively, starting from $s_0=2$, which is a simple root of the polynomial $f(Y)=Y^3-3Y+x-2\in \Q_2[[x]][Y]$ modulo $(x)$, meaning that $f(2)=x$ belong to the maximal ideal of $\Q[[x]]$, while $f'(2)=9$ does not.
By Hensel's lemma this root lifts iteratively to a unique root of $f$ in $\Q_2[[x]]$ which is precisely the series we are interested in. This lifting is achieved by the following recursion, which as usual can be interpreted as an ultrametric version of Newton's method for approximating roots numerically:
\begin{equation*}
s_0=2,\qquad s_{n+1}=s_n-\frac{f(s_n)}{f'(s_n)},\quad n\ge 0.
 \end{equation*}
We see by induction that the elements $s_n$, which in principle belong to $\Q_2[[x]]$, are actually in $\Z_2[[x]]$. Indeed we have that $f'(s_n)=3(s_n^2-1)=9+O(x)$ because $s_n\equiv s_0 \pmod{(x)}$; assuming by induction  that $s_n\in\Z_2[[x]]$ this implies that $f'(s_n)$ is an invertible element of $\Z_2[[x]]$, and therefore $s_{n+1}\in\Z_2[[x]]$.
The coefficients of $s_n$ stabilize as $n\to\infty$ and therefore the limit belongs to $\Z_2[[x]]$ as desired.

\subsubsection{An explicit formula involving Chebyshev's polynomials}
Through some manipulations, it is actually possible to obtain a fairly explicit description of the series expressing the desired algebraic function and check directly that its coefficients are $2$-integers. We include this because we hope the readers will find it amusing.

Solving for $y$ using Cardano's formula gives
\begin{align*}
 y&=\left( \frac{2-x}{2}+\sqrt{\frac{(2-x)^2}{4}-1}\right)^{1/3}+\left( \frac{2-x}{2}-\sqrt{\frac{(2-x)^2}{4}-1}\right)^{1/3}\\
 &=\left( 1-\frac{x}{2}+\sqrt{x\left(\frac{x}{4}-1\right)}\right)^{1/3}+\left(1- \frac{x}{2}-\sqrt{x\left(\frac{x}{4}-1\right)}\right)^{1/3},
 \end{align*}
 Where the square roots are taken with opposite signs in the two factors, and the cubic roots are meant to have  value 1 at  $x=0$.
 Expanding the cubic roots and writing $u^2=x$ we get
 \begin{align}
 y&=\sum_{n=0}^\infty \binom{1/3}{n}\left( \left(-\frac{x}{2}+\sqrt{x\left(\frac{x}{4}-1\right)}\right)^n +  \left(-\frac{x}{2}-\sqrt{x\left(\frac{x}{4}-1\right)}\right)^n              \right)\\
 &=\sum_{n=0}^\infty \binom{1/3}{n} u^n \left( \left(-\frac{u}{2}+\sqrt{\frac{u^2}{4}-1}\right)^n +  \left(-\frac{u}{2}-\sqrt{\frac{u^2}{4}-1}\right)^n              \right)\\
 \label{counterex:p2:Cheby}&=\sum_{n=0}^\infty 2\binom{1/3}{n} u^n  T_n\left(-\frac{u}{2}\right)
\end{align}
where $T_n$ is the $n$-th Chebyshev polynomial, satisfying $T_n(\cos\theta)=\cos(n\theta)$.
This can be seen for example from the identity $T_n\left(\frac{X+X^{-1}}{2}\right)=\frac{X^n+X^{-n}}{2}$, noticing that 
$$ 
\left(-\frac{u}{2}+\sqrt{\frac{u^2}{4}-1}\right)\left(-\frac{u}{2}-\sqrt{\frac{u^2}{4}-1}\right)     =1.
$$
Notice that $T_n$ is an odd polynomial for odd $n$ and an even polynomial for even $n$, thus $u^n  T_n\left(-\frac{u}{2}\right)$ is indeed a polynomial in $x$.
Furthermore, explicit formulae such as
\[
 T_n(X)=\frac{1}{2}\sum_{m=0}^{\floor{n/2}}(-1)^m\left(\binom{n-m}{m}+\binom{n-m-1}{n-2m}\right)(2X)^{n-2m}
\]
allow us to see that the coefficients of the series \eqref{counterex:p2:Cheby} are 2-integers as desired.

\medskip

\subsection{An example for all \texorpdfstring{$p$}{p}}
Consider the series $y=y(x)$ satisfying the algebraic equation
\begin{equation}
 y^p(y+1)=x
\end{equation}
and $y(0)=-1$. The finite branch points of the equation are at $x=0$ and $x=\frac{-p^p}{(p+1)^{p+1}}$.

Exactly as in the previous case, Hensel's lemma can be used to show that the series $y(x)$ is in $\Z_p[[x]]$,
thus showing that all finite branch points are inside the disk of convergence.

The details are as follows: starting from the simple root $s_0=-1$ we study the recurrence
\[
 s_{n+1}=s_n-\frac{f(s_n)}{f'(s_n)}.
\]
As before, $f'(s_0)=(-1)^p$ is an invertible element of $\Z_p$, and therefore, by induction, the series $f'(s_n)$ is an invertible element of $\Z_p[[x]]$; thus the whole sequence $(s_n)$ lie in $\Z_p[[x]]$ and so does its limit.

\begin{rem}
 It is easy to see that one can recover from these functions other examples that show how the condition $p>d$ cannot be removed from the theorem of Dwork-Robba. Indeed if the series for $y(x)$ converges strictly further than all branch points, than upon moving the point at infinity to the origin the radius of convergence will be strictly smaller than the distance to the first branch point. In formulae, setting $x=1/t^{p+1}, z=ty$ gives the algebraic equation $z^p(z+t)=1$ and the series $z(t)$ provides the example. These examples are of a different shape than those proposed in \cite{DR}.
\end{rem}

\subsubsection{Fuss-Catalan numbers and an explicit formula}
By differentiating the equation $y(x)^p(y(x)+1)=x$ with respect to $x$ we find $y' y^p ((p+1)y+p)=1$, and thus, using the original equation, we see that
\[
 y' ((p+1)y+p)=y\frac{y+1}{x}.
\]
By writing $y=\sum_{n\geq 0}a_n x^n$ we can identify the coefficient of $x^n$ in the left-hand side of the equation as
\[
 (p+1)\sum_{i=1}^n i a_i a_{n-i+1}-(n+1)a_{n+1},
\]
and the coefficient of $x^n$ in the right-hand side of the equation as
\[
 \sum_{i=0}^n a_i a_{n-i+1},
\]
so that we obtain the recurrence
\[
 a_{n+1}=\frac{1}{n}\sum_{i=1}^n a_i a_{n-i+1}\left(i(p+1)-1\right)\qquad n\geq 1
\]
and $a_0=a_1=-1$.
Up to the sign, these are integers known in combinatorics as the Fuss-Catalan numbers and they appear in the enumeration of certain trees or certain lattice paths as a generalization of Catalan numbers $\frac{1}{n+1}\binom{2n}{n}$. (See Stanley's volumes \cite{S} for many similar examples and much more.) They have an `explicit' formula as
\[
 a_n=\frac{(-1)^n}{(p+1)n-1}\binom{(p+1)n-1}{n}=\frac{(-1)^n}{pn-1}\binom{(p+1)n-2}{n}\qquad n\geq 1.
\]

In all of these examples the origin is itself a branch point, though of course the ramification point lies on a different branch. (We found that putting the branch point at $0$   was helpful for the computations involved in finding these series.)

\subsection{Proof of Part~\ref{T.N:ii}} The simplest examples here are perhaps obtained  (as in \cite{DR}) by taking $f(X,Y)=Y^p-1-X$, over the field $\C_p$: now $s(x)^p=1+x$, so $s(x)=\zeta\sum_{m=0}^\infty \binom{1/p}{m}x^m$, where $\zeta^p=1$. All such series have radius of convergence $p^{-p/(p-1)}$, whereas the only finite branch point is $-1$, with ramification index $p$.

A less evident example, with ramification index $2$, comes from the polynomial $f(X,Y)=Y^{p-1}(Y-1)-X$. It is irreducible of degree $p$ in $Y$, and in fact the point $x=\infty$ is totally ramified. The finite branch points for $x$ are obtained from the equation $f_y=(p-1)y^{p-2}(y-1)+y^{p-1}=0$, so they are $0$ (for $p>2$, with ramification indices $p-1,1$)  and $\rho:=-(p-1)^{p-1}p^{-p}$ (obtained with $y=(p-1)/p$, with ramification index $2$).

At $x=0$ we have a unique series solution $s(x)=\sum_{m=0}^\infty a_mx^m\in \C_p[[x]]$ (in fact over $\Q$) with  $a_0=1$ and $f(x,s(x))=0$.
As we have seen before, the fact that $f_y(a_0)=1$ is invertible in $\Z$ implies that $s(x)\in\Z[[x]]$.

Assuming now that $s(x)$ has radius of convergence $>1$,   it  would reduce modulo $p$  to a polynomial  $t(x)$,
but the congruence $f(x,t(x))\equiv 0\pmod p$ has clearly no polynomial solutions. Hence $s(x)$ has radius of convergence $1$, which is smaller than $|\rho|_p=p^p$.\qed

\medskip

This argument shows that for a fixed $f$ the radius of convergence is `usually' $1$, and it is  $>1$ when the irreducible polynomial $f$ acquires a factor $Y-P(X)$ when seen modulo $p$.

\medskip

\section{Proof of Theorem~\ref{T.BDR}}

  We  put
  \begin{equation*}
  s(x)=\sum_{m=0}^\infty a_mx^m,\qquad  s_n(x)=\sum_{m=0}^{n-1}a_mx^m,\quad \rho_n(x):=s(x)-s_n(x).
  \end{equation*}

Since $r$ is in the value group, up to a rescaling of $x$ we may take  $r=1$;  our assumption now corresponds to the fact that $v(a_m)\to \infty$ as $m\to\infty$. By multiplying $s(x)$ by a nonzero  constant we can also assume that $v(a_m)\ge 0$ for every $m\ge 0$, and that $f(x,Y)$ is the  minimal polynomial of $s(x)$  over $\O[x]$.

\medskip

\begin{rem}\label{T.BDR:NOTE}
As a side observation, we remark that the result is easier for rational functions, in which case we may argue as follows. The assertion is trivial for polynomials, so by induction  it suffices to prove that  if $a\in K^*$ and if $S(x):=(x-a)s(x)$   has radius of convergence $R>1$, while $s(x)$ converges for $|x|\le 1$, then $s(x)$ as well has radius of convergence $>1$.  To prove this, note that  if $|a|>1$ the sought conclusion  is clear, because $(x-a)^{-1}=-\sum_{n=0}^\infty a^{-n-1}x^n$
has radius of convergence $|a|$, so assume $|a|\le 1$. Then we can evaluate at $x=a$ both sides, to find $S(a)=0$. Hence, setting $S(x)=\sum_{n=0}^\infty A_nx^n$,   $$S(x)=S(x)-S(a)=(x-a)\sum_{n=0}^\infty A_n(x^{n-1}+ax^{n-2}+\dotsb +a^{n-1}),$$ thus $s(x)=\sum_{n=0}^\infty A_n(x^{n-1}+ax^{n-2}+\dotsb +a^{n-1})$, and  the conclusion follows very easily.\footnote{This argument, and in fact  the case of rational functions, remain valid also over $\C$.}

Note also  that
 this same argument would allow us to suppose that $f(X,Y)$ is monic in $Y$, though we do not need that.
\end{rem}

 In light of the remark, we shall suppose from now on $d\ge 2$.
 Going ahead with the general case, we let $|\cdot |_G$ denote the Gauss norm\footnote{For a series as above we define it as $\sup_n|a_n|$; see \cite{DGS}, Ch. IV,  for more.} and $v_G=-\log|\cdot|_G$ the corresponding order function, and set $h:=v_G(f_Y(x,s(x)))$, for the derivative $f_Y$ with respect to $Y$; since $f(X,Y)$ is irreducible we certainly have $f_Y(x,s(x))\neq 0$, so $h$ is well-defined, and it is $\ge 0$.

  Clearly  $f_Y(x,s_n(x))\xrightarrow[G]{} f_Y(x,s(x))$ for $n\to\infty$, hence $v_G(f_Y(x,s_n(x)))$  is constantly equal to $h$ for all large $n$, in fact it suffices that $v(a_m)>h$ for $m\ge n$.

   \medskip

  Let now $l$ be a fixed integer large enough so that for all $n\ge l$ we have $v(a_n)>h$ and $v_G(f_Y(x,s_n(x))=h$ . Also  pick $x_0\in K$ with $|x_0|\le 1$ and such that $|f_Y(x_0,s_l(x_0))|$ is maximum for $x$  in the closed unit disk. It is a well-known easy fact that such an $x_0$ exists, and the maximum is the Gauss norm of  the polynomial $f_Y(x,s_l(x))$.
  Note that $|f_Y(x_0,s_l(x_0))|=|f_Y(x,s_l(x))|_G=|f_Y(x,s(x))|_G$ and $v(f_Y(x_0,s_n(x_0)))=h$ for all $n\ge l$ since for such $n$ we have $v(a_n)>h$.

Now, the assumptions of convergence remain valid for $s(x_0+x)$ in place of $s(x)$. In fact  we have the following identities, justified by absolute convergence:

$s(x_0+x)=\sum_{m=0}^\infty a_m(x+x_0)^m=\sum_{r=0}^\infty (\sum_{m=r}^\infty a_m \binom{m}{r} x_0^{m-r})x^r$,
which  converges for $|x|\le 1$ because $a_m\to 0$ and $|\sum_{m=r}^\infty a_m \binom{m}{r}x_0^{m-r}|\le \underset{{m\ge r}}{\sup}|a_m|$.

Hence we may perform a translation and assume $x_0=0$.   With this normalization, observing  that $s(0)=a_0$, we have $0\le v(f_Y(0,a_0))=h= \underset{|x|\le 1}{\inf}v(f(x,s_l(x)))$.

  \medskip

  We want to perform another useful normalization.
  We put $$F(x,Y)=F_l(x,Y):=f(x,s_l(x)+Y)=c_d(x)Y^d+ c_{d-1}(x)Y^{d-1}+\dotsb + c_0(x),$$ so for instance $ c_0(x)=  F(x,0)=f(x,s_l(x))$. We have that $v_G(c_0(x))\to 0$ for $l$ tending to infinity, since $f(x,s_l(x))\xrightarrow[G]{} f(x,s(x))=0$.

  \medskip

  We have $c_1(x)=F_Y(x,0)=f_Y(x,s_l(x))$, whence $c_1(0)=F_Y(0,0)=f_Y(0,a_0)$.

  Hence  $v(c_1(0))=v_G(f_Y(x,s_l(x)))=h=v_G(c_1(x))$.

  \medskip

  We also have $F(x,\rho_l(x))=F(x,s(x)-s_l(x))=f(x,s(x))=0$.

  \medskip

 For notational convenience put now for this argument $b:=c_1(0)$ (we have $v(b)=h$, so $b\neq 0$) and   write $\rho_n(x)=:b\rho_n^*(x)$. Since $\rho_n(x)=s(x)-s_n(x)\xrightarrow[G]{} 0$,  we have that $v_G(\rho_n^*)\to\infty$ as $n$ grows to infinity.

  We   also put
  $$
  F^*(x,Y):=b^{-2}F(x,bY)=b^{d-2}c_d(x)Y^d+\dotsb +c_1(x)b^{-1}Y+ c_0(x)b^{-2}.
  $$

   \medskip

  Now observe that $F^*(x,\rho_l^*(x))=0$, and $F^*_Y(0,0)=c_1(0)b^{-1}=1$.

  Hence, on fixing in advance a large enough $l$, and changing $f$ with $F^*$,    replacing $s(x)$ with $\rho^*_l(x)$, $s_n(x)$ with $\rho^*_l(x)-\rho_{n+l}^*(x)$, we can assume at the outset that $v(a_n)\ge h_0>0$, $n=0,1,\dotsc$, for an arbitrarily  chosen $h_0$, as large as we want: it suffices to choose $l$ so that $v(a_m)\ge h_0+2v(b)=h_0+2h$ for $m\ge l$.

  Note also that with this normalization we have $a_m=0$ for $m<l$.

  \medskip

 In order to refresh the notation we  write down explicitly an expression for the new data:
  \begin{equation*}
  f(x,Y)=c_d(x)Y^d+c_{d-1}(x)Y^{d-1}+\dotsb +c_1(x)Y+c_0(x)\in \O[x,Y],
  \end{equation*}
  where $v_G(c_i)\ge 0$, $v(c_1(0))=0=v_G(c_1(x))$. We also note that  $f_Y(0,0)=c_1(0)=1$.

  \medskip

  We now recall a recurrence formula which is easily derived. Namely, we have
  \begin{equation*}\label{E.rec}
  a_n=-\hbox{coefficient of $x^n$ in} \quad f(x,s_n(x)).
  \end{equation*}

This formula appears  for instance in  the section on Puiseux series in \cite{Z2}, but it is also simple to prove it  directly. We have $s_{n+1}(x)=s_n(x)+a_nx^n$ and $f(x,s_{n+1}(x))=O(x^{n+1})$ (since $f(x,s(x))=0$). Also, $$f(x,s_n(x)+a_nx^n)=f(x,s_n(x))+a_nx^n f_Y(x,s_n(x))+O(x^{n+1})=f(x,s_n(x))+a_nx^n+O(x^{n+1}),$$ in view of the present normalization $f_Y(0,0)=1$. The assertion follows.

\medskip

To express the formula more explicitly, let  $D:=\underset{i}{\max}\deg c_i$, and write $c_i(x)=\sum_{j=0}^Dc_{ij}x^j$. For $n>\deg c_0$ the recursion yields,
\begin{equation}\label{E.rec2}
-a_n=\sum_{j=1}^Dc_{1j}a_{n-j}+\sum_{q=2}^d\left(\sum_{j,\kappa_1,\dotsc ,\kappa_q}c_{qj}a_{n-\kappa_1}\dotsm a_{n-\kappa_q}\right),
\end{equation}
where $q$ ranges from $2$ to $d$, whereas in the inner sums $j,\kappa_1,\dotsc ,\kappa_q$ satisfy
\begin{equation*}
 0\le j\le D,\quad 1\le \kappa_i\le n,\quad \sum_{i=1}^q(n-\kappa_i)=n-j.
\end{equation*}

Let $\eta=\min\{1,\min\{v(c_{1j}) : v(c_{1j})>0\}\}$ and $\varepsilon =\frac{\eta}{2D+1}$.
With the above normalizations, letting $h_0$ be large enough, we want to prove that
\begin{equation}\label{E.a_m}
v(a_m)\ge \varepsilon m+\eta,\qquad\hbox{for all $m$}.
\end{equation}

For $m\le D$  this is true because we are assuming that $v(a_m)\ge h_0$ for all $m$.

In general  we go ahead distinguishing two cases.

\medskip

{\bf First case}: {\it We have $v(c_{1j})>0$ for each $j\ge 1$}.

In this case  we argue by induction on $m$. Suppose the inequality true up to $m=n-1$.

Then  the term $c_{1j}a_{n-j}$  in the first sum on the right of \eqref{E.rec2}  has  order $\ge \underset{1\le j\le D}{\min} v(a_{n-j})+\eta$ which by induction is $\ge \varepsilon (n-D)+2\eta \ge  \varepsilon n+\eta$.

The terms in the summation corresponding to $q\ge 2$, by the inductive assumption, have order $\ge \varepsilon(\sum_{i=1}^q(n-\kappa_i))+q\eta \ge \varepsilon(n-D)+2\eta \ge \varepsilon n+\eta$.

Therefore we conclude that $v(a_n)\ge \varepsilon n+\eta$ and we are done by induction.

\medskip

{\bf Second case}: {\it There exists $j\ge 1$ with $v(c_{1j})=0$}.

In this case we let $\mu\ge 1$ be the largest such index $j$.

As $v(a_m)\to\infty$ each value $v(a_m)$ is attained only finitely many times, and therefore we may define an increasing sequence $0=\xi_0<\xi_1<\xi_2<\dotsb$, where $\xi_i$, $i>0$, are precisely the distinct values $v(a_m)$.

Now suppose the inequality $v(a_m)\ge \varepsilon m+\eta$ has been proved for all $m$ with $v(a_m)<\xi_e$, for a certain $e>0$. This holds for $e=1$ (empty case). Then  by induction we deduce it for all $m$ with $v(a_m)=\xi_e$.  This will prove the sought inequality for all $m$.

\medskip

For the inductive deduction we let $\mu>0$ be as above and, letting  $s$ be the maximum index for which $v(a_s)=\xi_e$,
 we define $n=\mu+s$.
Then we look again at the recurrence \eqref{E.rec2} for such $n$, and look at the various terms.

\medskip

{\tt Left-hand side}: We have $n>s$, hence $v(a_n)>\xi_e$.

{\tt Right-hand side}:  In a term $c_{1j}a_{n-j}$ with $j<\mu$ we have $n-j>n-\mu=s$, so $v(a_{n-j})>\xi_e$ by definition of $s$.

In a term $c_{1j}a_{n-j}$ with $j>\mu$, we have $v(c_{1j})>0$ hence either $v(a_{n-j})\ge \xi_e$, and then $v(c_{1j}a_{n-j})>\xi_e$, or $\xi_e>v(a_{n-j})\ge \varepsilon (n-j)+\eta$ by induction. In this last case we have $v(c_{1j}a_{n-j})\ge \varepsilon (n-j)+2\eta \ge \varepsilon n+\eta$.

In a term $c_{q,j}a_{n-\kappa_1}\dotsm a_{n-\kappa_q}$ in the second summation, hence with $q\ge 2$, either some index $n-\kappa_i$ is  such that a $v(a_{n-\kappa_i})\ge \xi_e$, and then $v(a_{n-\kappa_1}\dotsm a_{n-\kappa_q})>\xi_e$,  or $v(a_{n-\kappa_i})< \xi_e$ for all of the indices $n-\kappa_i$. In this case by induction we have $v(a_{n-\kappa_1}\dotsm a_{n-\kappa_q})\ge \varepsilon (\sum_i(n-\kappa_i))+q\eta \ge \varepsilon (n-j)+2\eta \ge \varepsilon n+\eta$.

\medskip

In conclusion, all the terms in the formula have order either $>\xi_e$ or $\ge \varepsilon n+\eta$, with the only possible exception of the term $c_{1\mu}a_{n-\mu}$. Since this last term has order $\xi_e$, it follows (from the ultrametric property) that $\xi_e\ge \varepsilon n+\eta$. Then if a term $a_{n-j}$ has order $\xi_e$  it follows that $v(a_{n-j})=\xi_e\ge \varepsilon n+\eta\ge \varepsilon (n-j)+\eta$, completing the induction step.

\medskip

Now from \eqref{E.a_m} the conclusion of the theorem follows at once.\qed

\section{Proof of Theorem~\ref{T.N2}}

In this section we will develop easy  lemmas to link the radius of convergence of the series $\sum_m \binom{\gamma_1}{m}\binom{\gamma_2}{m}x^m$ to the $2$-adic expansion of $\gamma_1,\gamma_2$. This will actually show that an uncountable number of pairs $\gamma_1,\gamma_2$ as in Theorem~\ref{T.N2} can be explicitely described.

\medskip

We recall that the $p$-adic valuation of a binomial coefficient can be computed in terms of the base-$p$ representation of the numbers.

A theorem of Kummer \cite{Kummer} states that for given integers $n \geq m \geq 0$ and a prime number $p$, the $p$-adic valuation $v_p\left(\binom{n}{m}\right)$ of the binomial coefficient $\binom{n}{m}$ is equal to the number of carries when $m$ is added to $n-m$ in base $p$.
This can also be expressed as
\[
 v_p\left(\binom{n}{m}\right)=\frac{S_p(m)+S_p(n-m)-S_p(n)}{p-1},
\]
where $S_p$ denotes the sum of the base-$p$ digits.

For $\gamma$ a $p$-adic integer the valuation $v_p\left(\binom{\gamma}{m}\right)$ can be expressed analogously in terms of the unique (infinite) base-$p$ expansion of $\gamma=\sum_{i=0}^\infty a_ip^i$ with integers $a_i\in [0,p-1]$. In particular the sum in base $p$ of $m$ and $\gamma-m$ involves only  finitely many carries unless the digits of $\gamma-m$ are eventually equal to $p-1$ and those of $\gamma$ to $0$; this happens when $\gamma$ is a positive integer smaller than $m$ , so that $\binom{\gamma}{m}=0$.

\medskip

Now we state a lemma that bounds the $2$-adic valuation of a binomial coefficient
$\binom{\gamma}{m}$, with $\gamma\in\Z_2$ and $m\in\N$, in terms of the length of the binary expansion of $m$ and the position of the next non-zero digit in the binary expansion of $\gamma$.

\begin{lem}\label{lem:2valuation}
Let $(a_n)$ be an increasing sequence of non-negative integers and let $\gamma=\sum_{i=0}^\infty 2^{a_{i}}$.
Let $m=\sum_{i=0}^{N} 2^{b_{i}}$ be a positive integer with its base-2 expansion, and $M$ the integer such that
\[
 \sum_{i=0}^{M-1} 2^{a_{i}} < m \leq \sum_{i=0}^{M} 2^{a_{i}},
\]
then $v_2\left(\binom{\gamma}{m}\right)\geq a_M-b_N$.
If in particular $m=2^{b}$ is a power of 2, then $v_2\left(\binom{\gamma}{2^b}\right)= a_M-b$.
\end{lem}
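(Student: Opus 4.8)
The plan is to compute $v_2\left(\binom{\gamma}{m}\right)$ via Kummer's theorem in the form recalled above, namely as the number of carries when $m$ is added to $\gamma-m$ in base~$2$, and to bound this carry count from below (and, in the power-of-$2$ case, exactly) using the hypothesis that the digits of $\gamma$ are exactly $1$ in positions $a_0<a_1<\dotsb$ and $0$ elsewhere. The key structural fact is the choice of $M$: since $\sum_{i=0}^{M-1}2^{a_i}<m\le\sum_{i=0}^{M}2^{a_i}$, the number $m$ ``uses up'' the lowest $M$ or $M+1$ of the set bits of $\gamma$ in a suitable sense, and in particular $m\le\sum_{i=0}^{M}2^{a_i}\le 2^{a_M+1}-1$, so $b_N\le a_M$ and the claimed bound $a_M-b_N$ is non-negative, as it must be.

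First I would treat the exact case $m=2^b$, which controls the general case. Here $\gamma-2^b$ has a clean binary description: in position $b$ the digit flips, and a borrow propagates \emph{downward} through the consecutive zeros of $\gamma$ below position $b$ until it hits the next set bit of $\gamma$, which is $a_{M-1}$ by the defining inequality $\sum_{i<M}2^{a_i}<2^b\le\sum_{i\le M}2^{a_i}$ (this forces $a_{M-1}<b\le a_M$, and in fact $b\le a_M$ with $b>a_{M-1}$). Adding $2^b$ back to $\gamma-2^b$ then produces exactly the carries that undo this borrow chain, i.e. one carry out of each position from $a_{M-1}$ up to $b$; counting them gives precisely $b-a_{M-1}$ carries... wait — I should double-check against the stated answer $a_M-b$. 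The correct bookkeeping is: when $b<a_M$ (so position $b$ is a zero of $\gamma$), adding $2^b$ to $\gamma$ sets that bit with no carry, so one expects few carries; the genuine carries arise when $b$ coincides with or lies below set bits, propagating \emph{upward} through the block of consecutive $1$'s of $\gamma$ starting at $a_M$. So the right picture is that positions $a_M,a_{M+1},\dots$ need not be consecutive, but the carry chain when adding $2^{a_M\text{-aligned}}$ runs up from $b$ through the zeros to $a_M$ and then continues through however many consecutive set bits follow; the minimal contribution, realized when those higher bits are \emph{not} consecutive, is exactly $a_M-b$. I would make this precise by writing $\gamma-2^b$ explicitly, identifying its digit string, and adding $2^b$ back bit by bit, which is a short finite computation.

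For the general $m=\sum_{i=0}^{N}2^{b_i}$ I would add the powers $2^{b_i}$ to $\gamma-m$ one at a time in increasing order of exponent, using subadditivity of the carry count is the wrong direction; instead I would use that the total number of carries in $m+(\gamma-m)$ is at least the number of carries in the single step $2^{b_N}+\bigl(\gamma-\sum_{i<N}\text{something}\bigr)$ — more carefully, one shows that forming $m$ out of $\gamma$ by successive subtractions, the last (highest) bit $2^{b_N}$ must borrow across at least $a_M-b_N$ zero-positions of $\gamma$ before reaching a set bit at height $\ge a_M$, because by the choice of $M$ the bits of $\gamma$ below $a_M$ have total weight $<m\le 2^{b_N+1}-1+(\text{lower stuff})$, forcing the high borrow to reach up to position $a_M$. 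Translating this borrow back into carries for $m+(\gamma-m)$ gives the bound $v_2\binom{\gamma}{m}\ge a_M-b_N$. The main obstacle I anticipate is the careful digit-level bookkeeping of the borrow/carry propagation when $\gamma$ may have long runs of consecutive $1$'s (so the $a_i$ are consecutive) versus isolated $1$'s — getting the inequality tight (and the power-of-$2$ case exact) requires pinning down exactly where the highest borrow terminates, and this is where the hypotheses on the ordering of the $a_i$ and the definition of $M$ must be used with precision; everything else is routine manipulation of base-$2$ expansions via Kummer's theorem.
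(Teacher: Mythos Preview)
Your approach via Kummer's carry count is exactly the paper's; its whole proof is the one line ``in the addition between $m$ and $\gamma-m$ the digits between the $b_N$-th and the $a_M$-th will have a carry.'' So the plan is right, but your bookkeeping in the power-of-$2$ case is backwards. When bit $b$ of $\gamma$ is $0$ (i.e.\ $a_{M-1}<b<a_M$), subtracting $2^b$ forces a borrow \emph{upward} through positions $b,b+1,\dotsc,a_M-1$ to the set bit at $a_M$, not downward to $a_{M-1}$: the bits of $\gamma-2^b$ are $1$ at each of $b,\dotsc,a_M-1$, $0$ at $a_M$, and equal to those of $\gamma$ elsewhere. Adding $2^b$ back then produces a carry out of exactly the positions $b,\dotsc,a_M-1$, giving $a_M-b$ carries on the nose. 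Your attempted self-correction (``propagating upward through the block of consecutive $1$'s of $\gamma$ starting at $a_M$'') compounds the confusion: you are adding $m$ to $\gamma-m$, not to $\gamma$, and the $a_i$ above $a_M$ need not be consecutive anyway.

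For the general bound, the successive-subtraction scheme you sketch is unnecessary and, as stated, not obviously sound (earlier subtractions may have already altered the bits you want the final $2^{b_N}$ to borrow through). The clean route --- which is what the paper's sentence encodes --- is this. From $a_{M-1}\le b_N\le a_M$ one has, at each position $j$ with $b_N<j<a_M$, both $m_j=0$ (the top bit of $m$ is $b_N$) and $\gamma_j=0$ (no $a_i$ lies strictly between $a_{M-1}$ and $a_M$); the carry relation $m_j+(\gamma-m)_j+c_j=\gamma_j+2c_{j+1}$ then forces $c_{j+1}=c_j$. Since $\gamma\bmod 2^{a_M}=\sum_{i<M}2^{a_i}<m$, there is a carry into position $a_M$, whence $c_{b_N+1}=\dotsb=c_{a_M}=1$, giving at least $a_M-b_N$ carries.
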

\begin{proof}
 The statement follows directly from Kummer's theorem, because in the addition between $m$ and $\gamma -m$ the digits between the $b_N$-th and the $a_M$-th will have a carry.
\end{proof}

Now we apply this lemma to two $2$-adic integers of a suitable shape.

\begin{prop}\label{prop:2val}
 Let $(a_n)$ be an increasing sequence of non-negative integers such that $\lim_{n\to\infty}(a_{n+1}-a_n)=+\infty$.
Let $\gamma_1=\sum_{i=0}^\infty 2^{a_{2i}}$ and $\gamma_2=\sum_{i=0}^\infty 2^{a_{2i+1}}$.

Then $\binom{\gamma_1}{m}\binom{\gamma_2}{m}\to 0$ 2-adically; for  $m=2^{a_j}$ we have $v_2\left(\binom{\gamma_1}{2^{a_j}}\binom{\gamma_2}{2^{a_j}}\right) = a_{j+1}-a_j.$

\end{prop}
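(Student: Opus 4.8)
The plan is to feed Lemma~\ref{lem:2valuation} into each of the two binomial coefficients separately, after untangling how the exponents of $\gamma_1$ and $\gamma_2$ interlace. Write $c_i:=a_{2i}$ and $d_i:=a_{2i+1}$, so that $\gamma_1=\sum_{i\ge 0}2^{c_i}$ and $\gamma_2=\sum_{i\ge 0}2^{d_i}$, where $(c_i)_i$ and $(d_i)_i$ are strictly increasing sequences of non-negative integers and the full sequence interlaces as $c_0<d_0<c_1<d_1<\cdots$. I will first establish the exact formula at $m=2^{a_j}$. Say $j=2i$ is even, so $m=2^{c_i}$. Applying the power-of-two case of Lemma~\ref{lem:2valuation} to $\gamma_1$ and the sequence $(c_l)_l$: the inequalities $\sum_{l<i}2^{c_l}<2^{c_i}\le\sum_{l\le i}2^{c_l}$ identify the relevant index as $M=i$, giving $v_2\!\left(\binom{\gamma_1}{2^{c_i}}\right)=c_i-c_i=0$. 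Applying it to $\gamma_2$ and $(d_l)_l$: since $d_{i-1}<c_i<d_i$ (from $a_{2i-1}<a_{2i}<a_{2i+1}$), the same kind of check gives $M=i$ there too, so $v_2\!\left(\binom{\gamma_2}{2^{c_i}}\right)=d_i-c_i=a_{2i+1}-a_{2i}$. Adding, $v_2\!\left(\binom{\gamma_1}{2^{a_j}}\binom{\gamma_2}{2^{a_j}}\right)=a_{j+1}-a_j$. The case $j=2i+1$ odd is the mirror image (swap $\gamma_1\leftrightarrow\gamma_2$): now $M=i$ for $\gamma_2$, yielding valuation $0$, and $M=i+1$ for $\gamma_1$ because $c_i<d_i<c_{i+1}$, yielding valuation $c_{i+1}-d_i=a_{j+1}-a_j$; the total is again $a_{j+1}-a_j$.

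For the convergence statement $\binom{\gamma_1}{m}\binom{\gamma_2}{m}\to 0$, I will show that for every integer $m\ge 2^{a_0}$, writing $b:=\floor{\log_2 m}$ for the position of the leading binary digit of $m$ and letting $j$ be the largest index with $a_j\le b$ (so $a_j\le b<a_{j+1}$), one has
\[
 v_2\!\left(\binom{\gamma_1}{m}\right)+v_2\!\left(\binom{\gamma_2}{m}\right)\ \ge\ \min\{a_{j+1}-a_j,\, a_{j+2}-a_{j+1}\}.
\]
Granting this, since $b\to\infty$ as $m\to\infty$ forces $j\to\infty$, the right-hand side tends to $+\infty$ by hypothesis, and the finitely many $m<2^{a_0}$ are irrelevant; this gives the claim. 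To prove the displayed bound, suppose $j=2i$ is even, so $c_i\le b<d_i$. If $b=c_i$, then Lemma~\ref{lem:2valuation} applied to $\gamma_2$ yields index $M=i$, hence $v_2\!\left(\binom{\gamma_2}{m}\right)\ge d_i-b=a_{j+1}-a_j$. If instead $c_i<b<d_i$ (whence also $b<c_{i+1}$), then Lemma~\ref{lem:2valuation} applied to $\gamma_1$ yields $M=i+1$, and since $b\le d_i-1$ we get $v_2\!\left(\binom{\gamma_1}{m}\right)\ge c_{i+1}-b\ge c_{i+1}-d_i+1>a_{j+2}-a_{j+1}$. The case $j$ odd is symmetric.

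The part that needs care, and that I expect to be the only real friction, is the bookkeeping that pins down the index $M$ of Lemma~\ref{lem:2valuation} in each subcase: there is an off-by-one ambiguity precisely when $b$ coincides with a digit position $a_k$, in which case $M$ can depend on the lower bits of $m$ and not just on $b$. This is resolved by playing the defining inequalities of $M$ in Lemma~\ref{lem:2valuation} against the two elementary estimates $a_k+1\le a_{k+1}$ and $\sum_{l\le k}2^{a_l}<2^{\,a_k+1}$, valid because $(a_n)$ is strictly increasing; every comparison needed in the argument reduces to these, while the growth hypothesis $a_{n+1}-a_n\to+\infty$ enters only at the very last step. Note in particular that the exact formula of the first paragraph shows the lower bound of the second paragraph is essentially sharp, being attained along $m=2^{a_j}$.
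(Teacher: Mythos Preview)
Your proof is correct and follows essentially the same route as the paper: both arguments feed Lemma~\ref{lem:2valuation} into the two binomial factors after pinning down the relevant index $M$ for each subsequence. The paper organizes the general lower bound slightly differently---defining a single $M$ via the full interlaced sequence $(a_i)$ and bounding both factors at once to obtain $v_2\ge a_{M+1}-a_M$ without your case split on whether $b_N$ coincides with some $a_j$---but your version is equivalent in effect and has the virtue of making the determination of the $M$-indices (which the paper leaves implicit) fully explicit.
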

\begin{proof}
Let $m=\sum_{i=0}^{N} 2^{b_{i}}$ be the base-2 expansion of a positive integer $m$, and $M$ the integer such that
\[
 \sum_{i=0}^{M-1} 2^{a_{i}} < m \leq \sum_{i=0}^{M} 2^{a_{i}}.
\]
Applying Lemma~\ref{lem:2valuation} to both $\gamma_1$ and $\gamma_2$ we see that $$v_2\left(\binom{\gamma_1}{m}\binom{\gamma_2}{m}\right)\geq a_{M}-b_N+a_{M+1}-b_N =a_{M+1}-a_M +2(a_M-b_N)\geq a_{M+1}-a_M.$$
As $m\to\infty$ so do $N,b_N$ and $M$, and therefore $v_2\left(\binom{\gamma_1}{m}\binom{\gamma_2}{m}\right)\to\infty$.

Furthermore, if $m=2^{a_j}$, then $b_N=a_M=a_j$ and
$$v_2\left(\binom{\gamma_1}{2^{a_j}}\binom{\gamma_2}{2^{a_j}}\right)= a_{j}-a_j+a_{j+1}-a_j = a_{j+1}-a_j$$
again by Lemma~\ref{lem:2valuation}.
\end{proof}

We can now give an explicit choice of the $\gamma_1,\gamma_2$.

Take for instance $a_n=n^2$ and define the  $2$-adic integers $\gamma_1=\sum_{i=0}^\infty 2^{(2i)^2}, \gamma_2=\sum_{i=0}^\infty 2^{(2i+1)^2}$ as in the previous proposition.

Let $m=\sum_{i=0}^{N} 2^{b_{i}}$ be the base-2 expansion of a positive integer $m$; then we see that
$b_N= \floor{\log_2 m}, M-1 \leq \sqrt{b_N}\leq M$ and $a_{M+1}-a_M=2M+1$, so that applying the proposition we obtain  $v_2\left(\binom{\gamma_1}{m}\binom{\gamma_2}{m}\right)\gg (\log m)^{1/2}$.

This shows that the series
\[
 s(x)=\sum_{m=0}^\infty \binom{\gamma_1}{m}\binom{\gamma_2}{m} x^m
\]
converges for $\abs{x}_2\leq 1$.
Furthermore, if $m=2^{n^2}$ then $v_2\left(\binom{\gamma_1}{m}\binom{\gamma_2}{m}\right)=2n+1$, which shows that the series does not converge for $\abs{x}_2>1$.

\begin{rem}
 For every $\gamma\in\Z_2\setminus \Z$ the sequence $\binom{2m}{m}\binom{\gamma}{m}$ does not tend to 0 2-adically.
Indeed if $m=2^n$ is a power of 2 which appears in the base-2 expansion of $\gamma$, then $v_2\left(\binom{\gamma}{m}\right)=0$ and $v_2\left(\binom{2^{n+1}}{2^n}\right)=1$.

This shows that it was not possible to construct a similar example using a single  $\gamma$, even with the help of an additional binomial coefficient. See Example~\ref{EX.2} for a more general discussion.
\end{rem}

Concerning all the powers $s(x)^q$ of  the specific $s(x)$ constructed above, it would be possible to prove the stated stronger conclusion that  their  radius of convergence is  $1$,
by analysing carefully the $2$-adic valuation of sums of products of binomial coefficients. Instead, it is simpler to  prove a more general lemma that avoids intricate combinatorial arguments and applies to a very general class of series.

\begin{rem}\label{rem:radiusge1}
 A series $\sum_{n\geq 0}a_n x^n \in 2\Z_2[[x]]$ has a radius of convergence greater than 1 if and only if there exists an $\alpha>0$ such that  {\it for all } $n\geq 0$ we have $v_2(a_n)\geq \alpha n$.

Indeed, if the radius of convergence is $>1$ there exists an $\alpha$ such that the inequality is true for all large $n$.  But then the assumption that all $a_n$ are divisible by 2 guarantees that we can shrink $\alpha$ in such a way as to satisfy it for \emph{all} $n\in\N$.
\end{rem}

\begin{lem}\label{lem:radius_powers}
 Let $s(x)=1+4\delta(x)$, where  $\delta(x)\in\Z_2[[x]]$  is a series such that $\delta(0)=0$.
 If $s(x)$ has a radius of convergence equal to 1, the same holds for all its powers.
\end{lem}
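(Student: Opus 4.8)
The plan is to argue by contraposition. Since $s=1+4\delta\in\Z_2[[x]]$, every power $s^q$ also lies in $\Z_2[[x]]$ and hence has radius of convergence at least $1$; so ``radius equal to $1$'' just means ``not strictly larger than $1$'', and the lemma is equivalent to the statement that if $s^q$ has radius of convergence strictly larger than $1$ for some $q\ge 1$, then so does $s$. The naive attempt to extract a $q$-th root is of no use, because taking roots may \emph{decrease} the radius of convergence (witness $(1+x)^{1/p}$ over $\C_p$). Instead the idea is to pass to the $2$-adic logarithm, under which the operation $s\mapsto s^q$ becomes $g\mapsto qg$, multiplication by a nonzero constant, which visibly preserves the radius of convergence.

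Concretely: since $s\in 1+4\Z_2[[x]]$ and $\abs{4}_2=2^{-2}<2^{-1}$, the series $\log s:=\sum_{k\ge1}\tfrac{(-1)^{k-1}}{k}(4\delta)^k$ has all its coefficients in $4\Z_2$ and vanishes at the origin (because $v_2(4^k/k)=2k-v_2(k)\ge 2$ for all $k\ge1$), and conversely $\exp$ carries the set of series in $4\Z_2[[x]]$ with zero constant term into $1+4\Z_2[[x]]$. The classical formal identities $\exp(\log(1+T))=1+T$ and $\log\bigl((1+T)^q\bigr)=q\log(1+T)$, specialised at $T=4\delta$, give $\exp(\log s)=s$ and $\log(s^q)=q\log s$ in $\Z_2[[x]]$. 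Assuming $s^q$ has radius of convergence $>1$, the Key Claim below (applied to $u=s^q$) shows that $\log(s^q)$ has radius of convergence $>1$; hence $\log s=\tfrac1q\log(s^q)$ has radius of convergence $>1$ as well, since scaling a power series by the fixed constant $1/q$ leaves $\limsup\abs{\cdot}_2^{1/n}$ unchanged; and then the Key Claim again (applied to $g=\log s$) shows that $s=\exp(\log s)$ has radius of convergence $>1$, which is exactly what was needed.

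\medskip
\noindent\emph{Key Claim.} If $g\in 4\Z_2[[x]]$ has $g(0)=0$ and radius of convergence $>1$, then $\exp g$ has radius of convergence $>1$; and if $u\in 1+4\Z_2[[x]]$ has radius of convergence $>1$, then $\log u$ has radius of convergence $>1$. To prove it, put $g:=u-1$ in the second case, so that in both cases $g=\sum_n g_n x^n\in 4\Z_2[[x]]$, $g(0)=0$, and $g$ has radius of convergence $>1$. By Remark~\ref{rem:radiusge1} there is $\alpha\in(0,2]$ with $v_2(g_n)\ge\alpha n$ for all $n$, and combining this with $v_2(g_n)\ge 2$ for $n\ge1$ yields $v_2(g_n)\ge 1+\tfrac\alpha2 n$ for $n\ge1$. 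Since $(g^k)_n=0$ for $n<k$, for $n\ge1$ one gets $v_2\bigl((g^k)_n\bigr)\ge k+\tfrac\alpha2 n$; hence the $n$-th coefficient of the summand $g^k/k!$ of $\exp g$ has valuation at least $k+\tfrac\alpha2 n-v_2(k!)\ge 1+\tfrac\alpha2 n$ (using $v_2(k!)\le k-1$), and likewise the $n$-th coefficient of the summand $(-1)^{k-1}g^k/k$ of $\log(1+g)$ has valuation at least $k+\tfrac\alpha2 n-v_2(k)\ge 1+\tfrac\alpha2 n$ (using $v_2(k)\le k-1$). Therefore the $n$-th coefficient of $\exp g$, resp.\ of $\log u$, has valuation at least $1+\tfrac\alpha2 n$ for every $n\ge1$, and a final appeal to Remark~\ref{rem:radiusge1} gives the conclusion.

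\medskip
The main obstacle is precisely this estimate: one has to absorb the denominators — $k$ for the logarithm and $k!$ for the exponential — occurring in the Taylor expansions, and the purely geometric bound $v_2(g_n)\ge\alpha n$ by itself is \emph{not} sufficient once $k$ is of the same order of magnitude as $n$. It is the interplay of that bound with the crude integrality bound $v_2(g_n)\ge 2$ — that is, the hypothesis that $s$ differs from $1$ by a multiple of $4$ and not merely of $2$ — which makes $k+\tfrac\alpha2 n-v_2(k!)\ge 1+\tfrac\alpha2 n$ hold uniformly for $k\le n$; with $2\delta$ in place of $4\delta$ these inequalities break down. Everything else is formal power-series manipulation, legitimate over $\Z_2$ because all the series involved have $2$-integral coefficients. (It would suffice to treat $q$ a power of $2$, but the argument above handles every $q\ge1$ at once.)
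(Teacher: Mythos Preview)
Your proof is correct and takes a route the paper itself anticipates: immediately after its own argument the paper remarks that the lemma ``can also be obtained from the $2$-adic exponential and logarithm series'', which is exactly what you do. The paper instead reduces to $q=2^k$ (odd roots being harmless over $\Z_2$) and inverts $s\mapsto s^{2^k}$ directly via the binomial series $(1+T)^{1/2^k}$, using that $\binom{1/2^k}{m}2^{(k+1)m}\in\Z_2$; your approach linearises the map $s\mapsto s^q$ through the logarithm, so preservation of the radius becomes transparent and all $q$ are handled at once. Both arguments ultimately rest on the same $2$-adic input, namely that $|4|_2<2^{-1/(2-1)}$ so that $\exp$ and $\log$ send $4\Z_2[[x]]$ and $1+4\Z_2[[x]]$ into each other (equivalently, that $\sqrt{1+4x}\in\Z_2[[x]]$), which is precisely where the hypothesis $s\in1+4\Z_2[[x]]$ rather than $1+2\Z_2[[x]]$ enters.
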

\begin{proof}
 It is enough to show this for exponents that are powers of 2 (since the series $(1+x)^{1/m}$ has coefficients in $\Z_2$ for odd $m$).
We can write
 \[
  S(x):=s(x)^{2^k}=1+2^{k+1}\Delta(x)
 \]
with $\Delta\in 2\Z_2[[x]]$. To see this, expand $s(x)^{2^k}=1+\sum_{m=1}^{2^k}\binom{2^k}{m}4^m\delta(x)^m$ and notice that, by the result of Kummer already mentioned, for all $m\geq 1$
$v_2\left(\binom{2^k}{m}\right)+2m\geq k+2$.

Assume that $S(x)$ has a radius of convergence greater than 1. Then by  Remark~\ref{rem:radiusge1}, writing $\Delta(x)=\sum_{n\ge 0}\Delta_nx^n$,  there exists $\alpha>0$ such that $v_2(\Delta_n)\geq \alpha n$ for all $n\ge 0$.
The same holds for each power $\Delta(x)^m$, as the inequality is linear homogeneous.

But now we can recover $s(x)$ from $S(x)$ again by the binomial theorem, and obtain
\[
s(x)=\sum_{m\geq 0}\binom{1/2^k}{m}(2^{k+1}\Delta(x))^m.
\]
The product $\binom{1/2^k}{m}2^{(k+1)m}$ lies in $\Z_2$, and the valuation of the coefficients of $x^n$ in each power $\Delta(x)^m$ is at least $\alpha n$, which shows that the whole series $s(x)$ has a radius of convergence greater than 1, which is a contradiction.
\end{proof}
The lemma is ultimately related to the fact that the coefficients of the Taylor series of $\sqrt{1+4x}$ are integers, and can also be obtained from the $2$-adic exponential and logarithm series.

\begin{proof}[Proof of Theorem~\ref{T.N2}]
Now to conclude the proof of Theorem~\ref{T.N2} we choose an increasing sequence $a_n$ such that $a_0=0, a_1\geq 2$ and $a_{n+1}-a_n\asymp n$ and define $\gamma_1,\gamma_2,s(x)$ as above. Proposition~\ref{prop:2val} and Lemma~\ref{lem:radius_powers} show that the radius of convergence of $s(x)$ and all its powers is exactly one.
\end{proof}

\begin{rem}\label{rem:ex_ogni_raggio}
 Using the same construction as in Proposition~\ref{prop:2val} it is possible to construct analogous series with any radius of convergence greater than 1.

 Fixing $\beta$ a positive real number, if $\beta=\liminf_{m\to\infty} \frac{1}{m}v_2\left(\binom{\gamma_1}{m}\binom{\gamma_2}{m}\right)$ then the radius of convergence of the series $s(x)$ is exactly $2^\beta$.  This can be achieved by defining the sequence $a_n$ inductively as $a_0=1, a_{n+1}=a_n+\ceiling{\beta 2^{a_n}}$ and applying Lemma~\ref{lem:2valuation} as in the proof of Proposition~\ref{prop:2val}.

 Arguing around the Newton polygon of this series one can also show that it has infinitely many zeroes in its disk of convergence.

 This shows that the rationality of the logarithm in base $p$ of the radius of convergence for algebraic series cannot be reduced to a question about $p$-adic differential equations alone (see \cite{Kedlaya_2010} for related discussions).
\end{rem}

\section{Linear differential equations of order 1: analogues of Theorem~\ref{T.BDR}}\label{R.N2}

As mentioned after the statement of Theorem~\ref{T.N2}, one can ask what happens for  even simpler examples   involving differential equations $s'(x)=u(x)s(x)$ of order $1$, where $u\in K(x)$.   Again, the issue itself concerns only  the ultrametric case. In this remark we discuss in short this question, and sketch a proof of an analogue of Theorem~\ref{T.BDR} for series of this kind.

 If the equation is Fuchsian, i.e. has only regular singularities, then the finite poles of $u(x)$ have  order $1$ and $u$ vanishes at $\infty$. By subtracting $m/x$ from $u$ ($m\in\N$)   we can also assume that $0$ is not a pole of $u$ (i.e. $s(0)\neq 0$). 
Then, an integral $s(x)$ is
a constant times   a finite product  of  functions the shape $(1+bx)^{\beta}=\sum  \binom{\beta}{m}b^mx^m$, where $b,\beta\in K^*$.
 
 \medskip
 
 Now, suppose  that  the series for $s(x)$ converges for $|x|\le 1$.  Then it has only finitely many zeros in $D(0,1^+)$, as is well-known (see \cite{Am} or \cite{DGS}). Moreover, if $x_0\in D(0,1^+)$ is a zero, we may divide out by $x-x_0$ and replace $s(x)$ by $s_1(x):=(s(x)-s(x_0))/(x-x_0)$, which has the effect to replace $u(x)$ by $u(x)-(x-x_0)^{-1}$.   Note (as in Remark~\ref{T.BDR:NOTE}) that the new series $s_1(x)$ again converges  on $|x|\le 1$.  Then, after iterating  this operation a finite number of times, we may suppose that $s(x)$ has no zeros in $D(0,1^+)$.
 
 At this stage we observe that $u(x)$ cannot have poles $x_0\in D(0,1^+)$; in fact, otherwise we could write $(x-x_0)s'(x)=((x-x_0)u(x))\cdot s(x)$ and evaluate both sides at $x=x_0$ (because of convergence on $|x|=1$), obtaining $s(x_0)=0$, which we have excluded.   
 Hence, dividing out $s(x)$ by a suitable polynomial, we may assume that  all the poles of $u(x)$ have absolute value $>1$, hence a series for $u(x)$ has radius of convergence $>1$.
 
\begin{example}\label{EX.2}  In particular, taking $s(x)=(1+x)^\beta$, this proves that no binomial sequence $\binom{\beta}{m}$, $\beta\in K$, can converge $p$-adically to $0$ as $m\to\infty$, unless $\beta$ is a natural number: it suffices to apply the above reduction steps starting with    $u(x):=s'(x)/s(x)=\beta/(1+x)$. Of course there are also simple direct arguments for this;  however that does not seem equally simple for slightly more involved cases. For instance taking $(1+ax)^{\alpha}(1+bx)^{\beta}$, with $p$-adic integers $\alpha,\beta\in\Z_p$ not both in $\N$,  and  distinct  $p$-adic units $a\neq b\in\Z_p^*$, we obtain that the sequence  $(c_m)$ defined by $c_m:=\sum_{r+s=m}a^rb^s \binom{\alpha}{r} \binom{\beta}{s}$ cannot converge to zero in $\Z_p$.  This can be compared with Theorem \ref{T.N2}.
\end{example}
 
 The fact that $s(x)$ has no zeros in $D(0,1^+)$ also implies, looking at Newton polygons,  that, after dividing by a constant, we may assume that $s(0)=1$,  that $s\in\O[[x]]$, and that $|s(x)-1|_G<1$ (in the sense of the Gauss norm).  For this see \cite{Am} or \cite{DGS}.   
 
 Now write $u(x)$ as a  finite sum  of terms $\beta_i(1-b_ix)^{-1}$, where  $b_i,\beta_i\in K^*$ and where $0<|b_i|<1$. Also let $q=p^r$ be a power of $p$ so large that $q\beta_i\in \O$ for all $i$.  Then the opening discussion,  on  integrating $s(x)$ explicitly,   shows that $s(x)^q$ has radius of convergence $>1$. 
 
 So, we have reached the same conclusion of Theorem \ref{T.BDR}, however  at the cost of raising $s(x)$ to a positive power.  Note that this last condition is relevant, as shown by examples like $(1+p^{p/(p-1)}x)^{1/p}$, where the $p$-th power has infinite radius of convergence whereas the binomial expansion $\sum_{m=0}^\infty \binom{1/p}{m}p^{mp/(p-1)}x^m$ has radius of convergence $1$ and does not converge in the whole   closed disk $D(0,1^+)$.

This already  shows a rather strong  limitation in order to produce examples like in Theorem \ref{T.N2},  but with differential equations of order $1$.
In fact, we now prove  this assertion
 in a strong sense.
 
\begin{prop} \label{P.N2} For a solution $s(x)$ as above the conclusion of Theorem \ref{T.BDR} holds, i.e. if $s(x)$ converges in $D(0,1^+)$, then in fact it has radius of convergence $>1$.
\end{prop}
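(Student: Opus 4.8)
The plan is to reduce to a situation in which $\log s(x)$ can be safely formed, to show that $\log s(x)$ has radius of convergence $>1$, and then to recover $s(x)$ by exponentiation.

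\emph{Preliminary reduction.} On top of the normalizations already performed I would make $s(x)$ very close to $1$. Write $s(x)=\sum_n s_nx^n$, let $s_l(x):=\sum_{n<l}s_nx^n$ be its truncation of degree $l$ and $\rho_l(x):=s(x)-s_l(x)$. The hypothesis that $s(x)$ converges in $D(0,1^+)$ means $v(s_n)\to\infty$, so $|\rho_l|_G\to0$ as $l\to\infty$; and $|s(x)-1|_G<1$ gives $|s(x)|=1$ on $D(0,1^+)$, so for $l$ large $s_l(x)$ has no zeros in $D(0,1^+)$, $|s_l|_G=1$, and $1/s_l(x)$ is analytic in a disk of radius $>1$. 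Replacing $s(x)$ by $\tilde s(x):=s(x)/s_l(x)$ we keep $\tilde s(0)=1$ and convergence in $D(0,1^+)$, we gain $|\tilde s-1|_G=|\rho_l|_G$ as small as we wish, and $\tilde s(x)$ still satisfies a first-order Fuchsian equation $\tilde s'=\tilde u\,\tilde s$ whose right-hand side $\tilde u=u-s_l'/s_l$ has all its poles of absolute value $>1$ (those of $u$ together with the zeros of $s_l$). Since $s_l(x)$ is a polynomial and $1/s_l(x)$ has radius of convergence $>1$, the radius of convergence of $\tilde s(x)$ exceeds $1$ exactly when that of $s(x)$ does. Hence I may assume from the outset that $|s(x)-1|_G<p^{-1/(p-1)}$; put $\nu:=v_G(s(x)-1)>\tfrac1{p-1}$.

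\emph{Forming $\log s$ and bounding its radius.} Set $L(x):=\log s(x)=\sum_{k\ge1}\tfrac{(-1)^{k-1}}{k}\bigl(s(x)-1\bigr)^k$. Since $|s-1|_G<p^{-1/(p-1)}$ the series converges for the Gauss norm, and in fact $v_G(L)=v_G(s-1)=\nu$, because for $k\ge2$ one has $(p-1)v_p(k)\le k-1$, hence $\tfrac{v_p(k)}{k-1}\le\tfrac1{p-1}<\nu$ and $\bigl|(s-1)^k/k\bigr|_G=|s-1|_G^{\,k}\,p^{v_p(k)}<|s-1|_G$, so the term $k=1$ strictly dominates. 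Thus $v(L_n)\ge\nu$ for all $n\ge1$ and $L_0=0$. Now $L'(x)=s'(x)/s(x)=u(x)$, a rational function all of whose poles have absolute value $>1$, so the Taylor series of $u(x)$ at $0$ has radius of convergence $>1$; since $L_n=u_{n-1}/n$ and $v(n)=O(\log n)$, the factor $1/n$ does not spoil this, so $L(x)$ too has radius of convergence $>1$. Combining this with $v(L_n)\ge\nu$, there is $\lambda>0$ with $v(L_n)\ge\lambda n$ for every $n\ge1$.

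\emph{Exponentiating.} Finally $s(x)=\exp L(x)=\sum_{n\ge0}L(x)^n/n!$, and since $L(0)=0$ the coefficient of $x^k$ is the \emph{finite} sum $\sum_{n=0}^{k}\bigl(L(x)^n\bigr)_k/n!$. For each $n$ we have $v\bigl((L(x)^n)_k\bigr)\ge\max(\lambda k,\,n\nu)$ — the bound $\lambda k$ because $L(x)^n$ is a sum of products $L_{m_1}\cdots L_{m_n}$ with $\sum m_i=k$, the bound $n\nu$ because there are $n$ factors of valuation $\ge\nu$ — while $v(n!)\le\tfrac{n}{p-1}$. As $\nu>\tfrac1{p-1}$, a one-line case distinction according to whether $n\nu\le\lambda k$ or not gives, uniformly in $n$,
\[
v\!\left(\frac{(L(x)^n)_k}{n!}\right)\ \ge\ \lambda k\Bigl(1-\tfrac1{\nu(p-1)}\Bigr)\ =:\ \sigma k,\qquad \sigma>0 .
\]
Hence $v(s_k)\ge\sigma k$ for all $k$, i.e. $s(x)$ converges in $D(0,(p^{\sigma})^-)$ with $p^{\sigma}>1$; by the preliminary reduction the original series has radius of convergence $>1$ as well. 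The one delicate point is that preliminary reduction: passing through $\log$ and $\exp$ is harmless only when $|s-1|_G$ is \emph{strictly} below $p^{-1/(p-1)}$ — on the boundary $\exp(\log s)$ may fail to converge even in $D(0,1^+)$, as the example $(1+p^{p/(p-1)}x)^{1/p}$ recalled above shows — and it is exactly the assumption of convergence on the closed disk that makes $|\rho_l|_G\to0$, so that one can divide the polynomial $s_l(x)$ out.
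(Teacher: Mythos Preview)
Your argument is correct, and in fact a bit cleaner than the paper's. Both proofs share the same skeleton: reduce so that $|s(x)-1|_G$ is strictly below the exp/log threshold (you divide by a truncation $s_l$ of $s$; the paper multiplies by a truncation $g$ of $s^{-1}$, which amounts to the same thing), form $L(x)=\log s(x)$, argue that $L$ has radius of convergence $>1$, and then exponentiate with the standard $v(n!)\le n/(p-1)$ estimate to recover the same for $s(x)$.

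The difference lies in how the radius of $L(x)$ is obtained. The paper relies on the preliminary observation, proved just before the Proposition via the explicit factorization $s(x)=\prod(1-b_ix)^{\beta_i}$, that some power $s(x)^q$ already has radius $>1$; then $L=q^{-1}\log(1+\phi)$ with $\phi=s^q-1$ of radius $>1$, and a further paragraph is spent bounding $\phi$ (and hence $L$) on a slightly larger disk. You bypass this entirely by noting that $L'(x)=s'(x)/s(x)=u(x)$ is the rational function in the differential equation, whose poles all lie outside $D(0,1^+)$, so its Taylor series has radius $>1$; termwise integration (the factor $1/n$ being harmless) then gives the same for $L$. This is more direct and does not invoke the product decomposition or the intermediate result on $s^q$; conversely, the paper's route has the small expository advantage of exhibiting $s^q$ with radius $>1$ as a result of independent interest before sharpening to $s$ itself.
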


\begin{proof}[Sketch of proof.] We work under the reduction steps explained above, namely assuming that $s(0)=1$ and that $|s(x)-1|_G<1$, which means that for $m>0$ the coefficients $a_m$ lie in the maximal ideal $\mathfrak{M}$. Since they tend to $0$ by assumption, we may then find a polynomial $g(x)\in\O[x]$, $g(x)\equiv 1\pmod{\mathfrak{M}}$ such that $g(0)=1$ and $|s(x)g(x)-1|_G<p^{-2}$ (just truncate a series for $s(x)^{-1}$).  Note that the roots of $g$ will have absolute value $>1$, hence $g^{-1}$ has radius of convergence $>1$, and thus it suffices to prove the result after replacing $s$ with $sg$, and thus assuming that $|s(x)-1|_G<p^{-2}$.

With this proviso, setting $\theta=\theta(x):=s(x)-1$, we have   that $|\theta|_G<p^{-2}$.
Also, letting $q=p^r>1$ be  such that the conclusion holds for $s^q$,  we have $$s(x)^q=(1+\theta)^q=1+q\theta+\binom{q}{2}\theta^2+\dotsb +\theta^q.$$ It is easy to check that the maximal Gauss valuation of the terms after the first one,  is attained at the second term, hence putting $s(x)^q=1+\phi$ we have $|\phi|_G\le |q\theta|_G< (p^2q)^{-1}$. But then  the formal series expansion for $$\log s=\log(1+\phi)^{1/q}=l(x):=q^{-1}\log(1+\phi)=q^{-1}(\phi-\phi^2/2+\dotsb)$$ holds inside $\O[[x]]$.

 Note that $1+\phi=s(x)^q=\prod (1-b_ix)^{q\beta_i}$ has radius of convergence  $>1$,  hence $\phi$ has radius of convergence $>1$ as well. Then we may write $\phi=p^2q\psi$, where $\psi\in \O[[x]]$ has radius of convergence say $>R>1$. 

The next step is to bound $\psi$ on some disk of radius $>1$. To this end, write $\psi(x)=\sum_{m=0}^\infty \gamma_mx^m$ with $\gamma_m\in\O$, so they satisfy $|\gamma_m|\le \min (1, CR^{-m})$ for a suitable $C>1$.  Define $m_0\ge 0$ as the integer such that $CR^{-m_0-1}<1\le CR^{-m_0}$.  Also, for given small $\epsilon >0$ find $R_0>1$, $R_0<R$,  so close to $1$  that $\max (R_0^{m_0},C(R_0/R)^{m_0+1})<1+\epsilon$, which certainly we may do as the maximum tends to $1$ for $R_0\to 1^+$.  Note that  then for $|x|\le R_0$ the series $\sum \gamma_mx^m$ for $\psi(x)$ converges and we have $|\psi(x)|\le  \max(|\gamma_mx^m|)\le 1+\epsilon$.

Hence, letting $\Delta=D(0,R_0^+)$, we have that    $\phi$ converges in $\Delta$ and takes therein values  whose absolute value is $<(p^2q)^{-1}(1+\epsilon)$. It follows that  the series  $l(x)$ satisfies $|l(x)|_G\le p^{-2}$ and   on $\Delta$   takes values in the disk $D(0,p^{-2}(1+\epsilon)^+)$ (for small enough $\epsilon$). In turn, we may write the formal equality
$$s(x)=\exp\log s=\exp  l(x)=1+l(x)+(l(x))^2/2!+\dotsb ,$$
because $|l|_G\le p^{-2}<p^{-1/(p-1)}$.  If $\epsilon$ is small enough so that $p^{-2}(1+\epsilon)<p^{-1/(p-1)}$ the equality remains valid for $x\in\Delta$, and we also conclude that the  radius of convergence of $s(x)$ is $\ge R_0>1$, as wanted. (See \cite{DGS}, especially Lemma 7.1,  for these  last deductions.)
\end{proof}

Since the matter is not quite near the rest of the paper, we have not pushed forward the analysis for differential equations of order 1 which are non-Fuchsian, though we are inclined to think that the methods prove that the result still holds. In that case one should be able to reduce to series of the shape $\exp (c_1x+c_2x^2+\dotsb)$, where $c_i\to 0$. It should be not too difficult  to prove that such series cannot converge on the closed disk of convergence. (See indeed the paper \cite{BS} of Bombieri and Sperber for much more general conclusions, and see  also Crew's paper \cite{C}, \S~4, especially  Prop.~4.4 and  Prop.~4.5, for results quite related to the above.)

\begin{bibdiv}
\begin{biblist}

\bib{Am}{book}{
   author={Amice, Yvette},
   title={Les nombres $p$-adiques},
   language={French},
   series={Collection SUP: ``Le Math\'ematicien''},
   volume={14},
   note={Pr\'eface de Ch. Pisot},
   publisher={Presses Universitaires de France, Paris},
   date={1975},
   pages={190},
   review={\MR{0447195}},
}

\bib{A}{book}{
   author={Andr\'e, Yves},
   title={$G$-functions and geometry},
   series={Aspects of Mathematics},
   volume={E13},
   publisher={Friedr. Vieweg \& Sohn, Braunschweig},
   date={1989},
   pages={xii+229},
   isbn={3-528-06317-3},
   review={\MR{0990016}},
   doi={10.1007/978-3-663-14108-2},
}

\bib{B}{article}{
   author={Baldassarri, Francesco},
   title={Continuity of the radius of convergence of differential equations
   on $p$-adic analytic curves},
   journal={Invent. Math.},
   volume={182},
   date={2010},
   number={3},
   pages={513--584},
   issn={0020-9910},
   review={\MR{2737705}},
   doi={10.1007/s00222-010-0266-7},
}

\bib{BB}{article}{
   author={Bilu, Yuri},
   author={Borichev, Alexander},
   title={Remarks on Eisenstein},
   journal={J. Aust. Math. Soc.},
   volume={94},
   date={2013},
   pages={158--180},
   doi={10.1017/S144678871300013X},
}

\bib{BDR}{article}{
   author={Bosch, Siegfrid},
   author={Dwork, Bernard M.},
   author={Robba, Philippe},
   title={Un th\'eor\`eme de prolongement pour des fonctions analytiques},
   language={French},
   journal={Math. Ann.},
   volume={252},
   date={1979/80},
   number={2},
   pages={165--173},
   issn={0025-5831},
   review={\MR{0593629}},
   doi={10.1007/BF01420121},
}

\bib{BS}{article}{
   author={Bombieri, Enrico},
   author={Sperber, Steven},
   title={On the $p$-adic analiticity of solutions of linear differential equations},
   journal={Illinois J. Math.},
   volume={26},
   date={1982},
   number={1},
   pages={10--18},
}

\bib{BSS}{article}{
   author={Bostan, Alin},
   author={Salvy, Bruno},
   author={Singer, Michael F.},
   title={On deciding transcendence of power series},
   doi={https://doi.org/10.48550/arXiv.2504.16697},
}

\bib{C}{article}{
   author={Crew, Richard},
   title={$F$-isocrystals and $p$-adic representations},
   conference={
      title={Algebraic geometry, Bowdoin, 1985},
      address={Brunswick, Maine},
      date={1985},
   },
   book={
      series={Proc. Sympos. Pure Math.},
      volume={46, Part 2},
      publisher={Amer. Math. Soc., Providence, RI},
   },
   isbn={0-8218-1480-X},
   date={1987},
   pages={111--138},
   review={\MR{0927977}},
   doi={10.1090/pspum/046.2/927977},
}

\bib{Dw}{article}{
   author={Dwork, Bernard M.},
   title={$p$-adic cycles},
   journal={Inst. Hautes \'Etudes Sci. Publ. Math.},
   number={37},
   date={1969},
   pages={27--115},
   issn={0073-8301},
   review={\MR{0294346}},
}

\bib{DGS}{book}{
   author={Dwork, Bernard M.},
   author={Gerotto, Giovanni},
   author={Sullivan, Francis J.},
   title={An introduction to $G$-functions},
   series={Annals of Mathematics Studies},
   volume={133},
   publisher={Princeton University Press, Princeton, NJ},
   date={1994},
   pages={xxii+323},
   isbn={0-691-03681-0},
   review={\MR{1274045}},
}

\bib{DvdP}{article}{
   author={Dwork, Bernard M.},
   author={van der Poorten, Alfred J.},
   title={The Eisenstein constant},
   journal={Duke Math. J.},
   volume={65},
   date={1992},
   number={1},
   pages={23--43},
   issn={0012-7094},
   review={\MR{1148984}},
   doi={10.1215/S0012-7094-92-06502-1},
}

\bib{DvdP-add}{article}{
   author={Dwork, Bernard M.},
   author={van der Poorten, Alfred J.},
   title={Corrections to: ``The Eisenstein constant'' [Duke Math.\ J.\ {\bf
   65} (1992), no.\ 1, 23--43; MR1148984 (93c:12010)]},
   journal={Duke Math. J.},
   volume={76},
   date={1994},
   number={2},
   pages={669--672},
   issn={0012-7094},
   review={\MR{1302329}},
   doi={10.1215/S0012-7094-94-07624-2},
}

\bib{DR}{article}{
   author={Dwork, Bernard M.},
   author={Robba, Philippe},
   title={On natural radii of $p$-adic convergence},
   journal={Trans. Amer. Math. Soc.},
   volume={256},
   date={1979},
   pages={199--213},
   issn={0002-9947},
   review={\MR{0546915}},
   doi={10.2307/1998108},
}

\bib{Katz}{article}{
   author={Katz, Nicholas M.},
   title={On the differential equations satisfied by period matrices},
   journal={Inst. Hautes \'Etudes Sci. Publ. Math.},
   number={35},
   date={1968},
   pages={223--258},
   issn={0073-8301},
   review={\MR{0242841}},
}

\bib{Kedlaya_2010}{book}{
   author={Kedlaya, Kiran S.},
   title={$p$-adic differential equations},
   series={Cambridge Studies in Advanced Mathematics},
   volume={[199]},
   edition={2},
   publisher={Cambridge University Press, Cambridge},
   date={2022},
   pages={xx+496},
   isbn={978-1-009-12334-1},
   review={\MR{4404724}},
}

\bib{Kummer}{article}{
   author={Kummer, Ernst E.},
   title={\"Uber die Erg\"anzungss\"atze zu den allgemeinen
   Reciprocit\"atsgesetzen},
   language={German},
   journal={J. Reine Angew. Math.},
   volume={44},
   date={1852},
   pages={93--146},
   issn={0075-4102},
   review={\MR{1578793}},
   doi={10.1515/crll.1852.44.93},
}

\bib{L}{book}{
   author={Lang, Serge},
   title={Introduction to algebraic and abelian functions},
   series={Graduate Texts in Mathematics},
   volume={89},
   edition={2},
   publisher={Springer-Verlag, New York-Berlin},
   date={1982},
   pages={ix+169},
   isbn={0-387-90710-6},
   review={\MR{0681120}},
}

\bib{LangAlgebra}{book}{
   author={Lang, Serge},
   title={Algebra},
   series={Graduate Texts in Mathematics},
   volume={211},
   edition={3},
   publisher={Springer-Verlag, New York},
   date={2002},
   pages={xvi+914},
   isbn={0-387-95385-X},
   review={\MR{1878556}},
   doi={10.1007/978-1-4613-0041-0},
}

\bib{M1}{article}{
   author={Mechik, Rachid},
   title={Sur la constante d'Eisenstein},
   journal={Ann. Math. Blaise Pascal},
   volume={15},
   date={2008},
   pages={87--108},
  }

\bib{M2}{article}{
   author={Mechik, Rachid},
   title={A Simplified Proof For a Theorem of Dwork and van der Poorten},
   journal={$p$-adic Numbers, Ultrametric Analysis and Applications.},
   volume={3},
   date={2012},
   pages={187--192},
  }

\bib{S}{book}{
   author={Stanley, Richard},
   title={Enumerative Combinatorics},
   publisher={Cambridge University Press},
   date={1997},

}

\bib{Z3}{article}{
   author={Zannier, Umberto},
   title={Good reduction of certain covers $\mathbf{P}^1\to\mathbf{P}^1$},
   journal={Israel J. Math.},
   volume={124},
   date={2001},
   pages={93--114},
   issn={0021-2172},
   review={\MR{1856506}},
   doi={10.1007/BF02772609},
}

\bib{Z2}{book}{
   author={Zannier, Umberto},
   title={Introductory notes on valuation rings and function fields in one
   variable},
     publisher={Higher Education Press},
   date={2025},
}

\end{biblist}
\end{bibdiv}

\end{document}